\newtheorem{theorem}{Theorem}[section]
\newtheorem{definition}[theorem]{Definition}
\newtheorem{remark}[theorem]{Remark}
\newtheorem{lemma}[theorem]{Lemma}
\newtheorem{example}{Example}[section]
\newcommand{\mt}{\mathcal{T}}
\newcommand{\st}{\mathcal{S}(d_{1}, d_{2}, \alpha, \beta, \mathcal{T})}
\newcommand{\m}{\mathcal{M}}
\begin{document}

\title{On the dimension of splines spaces over T-meshes with smoothing
cofactor-conformality method}
\author{X. Li\\
Department of Mathematics, USTC,\\
Hefei, Anhui Province 230026, P. R. China. \\
lixustc@ustc.edu.cn}

\maketitle

\begin{abstract}
The present paper provides a general formula for the dimension of spline space over T-meshes using smoothing
cofactor-conformality method. And we introduce a new notion, \textbf{Diagonalizable T-mesh}, over which the dimension
formula is only associated with the topological information of the T-mesh. A necessary and sufficient condition for characterization
a diagonalizable T-mesh is also provided. Using this technique, we find that the
dimension is possible instable under the condition of~\cite{LiWaZh06} and
we also provide a new correction theorem.
\end{abstract}

\textbf{Keywords: } Dimension, T-splines, Spline space, T-mesh, smoothing cofactor-conformality method

\section{Introduction}
NURBS (Non-Uniform Rational B-Spline) is the standard for generating and
representing free-form curves and surfaces, which is a basic tool for using
in CAD~\cite{Farin02} and is also a desirable tool for iso-geometric analysis~\cite{Cottrell:2009rp}.
A well-known and significant disadvantage of NURBS is that it is based on a tensor product
structure with a global knot insertion operation. It is desirable to generalize NURBS to
spline space which can hander hanging nodes.

Spline spaces over T-meshes $\st$ were first introduced in~\cite{htspline1}, which is a bi-degree $(d_{1}, d_2)$ piecewise
polynomial spline space over T-mesh $\mt$ with smoothness order
$\alpha$ and $\beta$ in two directions. Spline space over T-meshes have
been applied in fitting~\cite{htspline2}, stitching~\cite{htspline3},
simplification~\cite{htspline4}, isogeometric analysis~\cite{iga_pht1},~\cite{iga_pht2},
solving elliptic equations~\cite{pht_pde} and spline space over triangulations with hanging nodes~\cite{Schumaker12_ca}. Spline over T-mesh has several advantages. For example,
it has a simple local refinement which will
never introduce additional refinement according to the definition.
And it is a polynomial in each face which has simple and efficient
integration for numerical analysis.

NURBS with local refinement is discovered before spline over T-mesh,
called ''T-spline", which are defined on a T-mesh by certain collections
of B-splines functions defined on the mesh~\cite{SeZhBaNa03},~\cite{SeCaFiNoZhLy04}.
T-splines have been proved to be a powerful free-form geometric shape technology that solves most of the
limitations inherent in NURBS. T-splines have several advantages
over NURBS such as local refinement~\cite{SeCaFiNoZhLy04} and
watertightness~\cite{Sederberg08}, and they are forward and
backward compatible with NURBS. These capabilities make T-splines
attractive both for CAD and also desirable for iso-geometric
analysis~\cite{Bazilevs2009}. However, it is very difficult to unravel the
mystery of T-spline spaces which have important implications in establishing
approximation, stability, and error estimates~\cite{BaBeCoHuSa06}. And till now,
only a sub-class of T-spline space, analysis-suitable T-splines space~\cite{LiZhSeHuSc10}, \cite{ScLiSeHu10}, \cite{LiSc10}
has been discovered using the technology from spline space over T-meshes.

Thus, it is very important to understand the spline space of piecewise
polynomials of a given smoothness on a T-mesh, which foundation but non-trivial step is
to calculate the dimension of the space. Till now, many different methods have been applied to tackle
these issues, such as B-net~\cite{htspline1}, minimal determining set~\cite{Schumaker12_nn}, smoothing
cofactor-conformality method~\cite{LiWaZh06} and homological technique~\cite{Mourrain}. B-net and minimal
determining set methods are suitable for spline space with reduced regularity, i.e., the degrees for the
polynomial is larger enough than the smoothness order. Smooth cofactor-conformality method~\cite{wangrh}, \cite{sch_scm}
is a power tool for calculating the dimension of spline space in multi-variate splines theory. It transfers the
smoothness conditions into algebraic forms and calculate the dimension using linear algebraic tools. Homological
technique is another power tool for calculating the dimension using the similar idea except regarding the smoothness
conditions as the kernel of a certain linear maps. The present paper is focusing on smoothing cofactor-conformality method.

All the existing dimension results are forcing the T-mesh to be special nesting structure, such as hierarchical,
regular T-subdivision and no cycles~\cite{Schumaker12_nn}. With such structure, one can calculate the dimension level by level.
Our approach is based on a different point of view. As smoothing cofactor-conformality method can convert the smoothness conditions
into algebraic forms, so we directly focus on the algebraic forms and study the condition, under which the
dimension of the linear system doesn't associated with knot values. And then we use the condition to
find a new notion, diagonalizable T-meshes, is the corresponding T-meshes. We believe that this new notion is the key
condition to compute the dimension using smoothing cofactor-conformality method. The main contribution of the present paper
includes,
\begin{itemize}
  \item We provide a general formula for the dimension of spline space over any regular T-meshes without holes
  using smoothing cofactor-conformality method.
  \item We provide a new notion, diagonalizable T-mesh, over which the dimension formula is only associated
  with the topological information of the T-mesh. We also provide a necessary and sufficient condition for characterization
  diagonalizable T-meshes;
  \item We discover that the dimension is possible instable under the condition of~\cite{LiWaZh06} and we also provide a new
  correction theorem.
\end{itemize}

The remaining paper is structured as follows. Pertinent background
on spline space over T-mesh is reviewed in Section 2. In Section 3,
we review how to use smoothing cofactor-conformality method to analysis
the dimension of spline space over T-meshes. In section 4, we provide a condition under which
we can calculate the dimension of spline space over T-mesh without considering the knot values. In section 5,
we first show that the dimension result in~\cite{LiWaZh06} is not correct and we provide
a new correction based on this new technology. The
last section is conclusion and future work.

\section{T-mesh and spline over T-mesh}

In this section, we briefly review the notion of spline spaces over
T-meshes and the related dimension results of the corresponding
spline space.
\subsection{T-mesh}
\label{sec:tmesh} A T-mesh $\mt$ is a collection of axis-aligned rectangles $F_{i}$ such
that the interior of the domain $\Omega$ is $\cup F_{i}$, and the distinct rectangles
$F_{i}$ and $F_{j}$ can only intersect at points on their edges.
The rectangles $F_{i}$ are also called the \emph{face} or \emph{cell} of the T-mesh. The
vertices of the rectangles are called the \emph{nodes} or \emph{vertices} for a T-mesh.
The line segment connecting two adjacent vertices on a grid line is called an
\emph{edge} of the T-mesh. T-meshes include tensor-product
meshes as a special case. However, in contrast to tensor-product meshes,
T-meshes are allowed to have \emph{T-junctions}, or \emph{T-nodes}, which are vertices of one rectangle
that lies in the interior of an edge of another rectangle.
The domain $\Omega$ need not be rectangular, which may have holes, concave corners. For example
the T-mesh in Figure~\ref{fig:tmeshexample}, the grey region is a hole and vertices $V_{38}$ and $V_{88}$ are
both concave corners. In the present paper, we require
the T-meshes to be \textbf{regular} and \textbf{without holes}. Here regular means that
the set of all rectangles for a T-mesh containing a vertex has a connected interior~\cite{Schumaker12_cagd}.

\begin{figure}[htbp]
\centering
~\\[-1ex]
\subfigure {\includegraphics [width=2.3in]{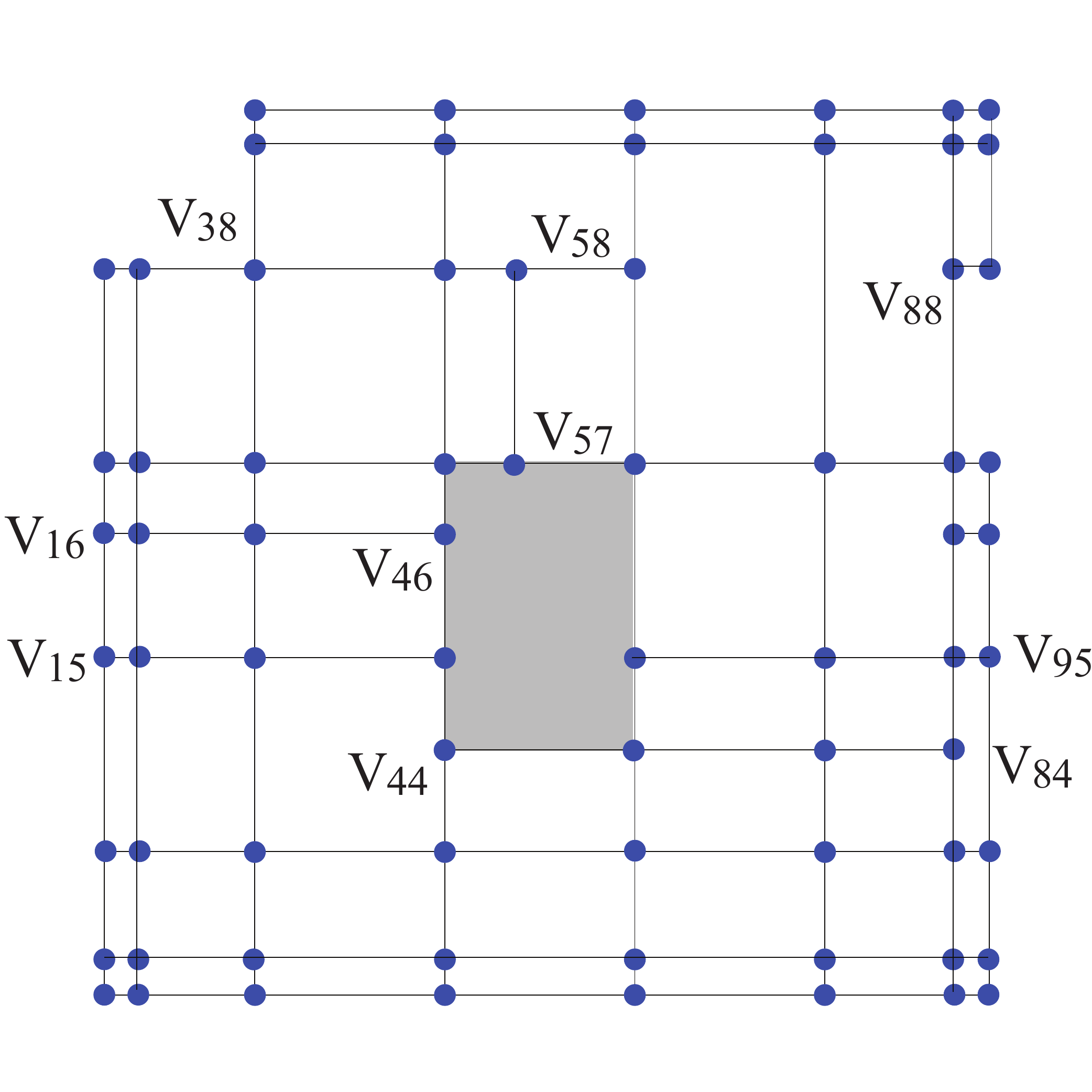}}
~\\[-1ex]
\caption{A T-mesh.} \label{fig:tmeshexample}
~\\[-1ex]
\end{figure}

The vertices, edges can be divided into two parts. If a vertex is on the boundary grid
line of the T-mesh, then is called a \emph{boundary vertex}.
Otherwise, it is called an \emph{interior vertex}. If both vertices of an edge are boundary
vertices, then it is called a \emph{boundary edge}; otherwise it is
called an \emph{interior edge}. An \emph{l-edge} is a line
segment which consists of several interior edges. It is the longest
possible line segment, which interior edges are connected and two
end points being T-junctions or boundary vertices. l-edges have three different classes.
If the two end vertices of a l-edge are interior vertices, then the l-edge is called \emph{interior l-edge}.
If two end vertices of a l-edge are both boundary vertices, then the l-edge is called a \emph{cross-cut}.
Otherwise, if one end vertex is boundary vertex and the other is interior vertex, then the l-edge is called
a \emph{ray}. A \emph{mono-vertex} is the intersection vertex of an interior l-edge and a cross-cut or a ray
and a \emph{free-vertex} is the intersection between cross-cuts and rays.
For example, in Figure~\ref{fig:tmeshexample}, vertices $V_{53}$,
$V_{44}$ and $V_{46}$ are interior vertices, and $V_{15}$, $V_{16}$
and $V_{95}$ are boundary vertices. The l-edge $V_{15}V_{95}$
is a cross-cut, while $V_{16}V_{46}$ is a ray, and $V_{44}V_{84}$
and $V_{57}V_{58}$ are interior l-edges.

For later use we introduce some notations for a T-mesh as shown in Table~\ref{tab:1}

\begin{table}
  \centering
  \caption{Notations for a T-mesh}\label{tab:1}
  \begin{tabular}{l l}
  \hline
  $F$ & number of faces in $\mt$\\
  $E^{h}$ & number of horizonal interior edges in $\mt$\\
  $E^{v}$ & number of vertical interior edges in $\mt$\\
  $V$ & number of interior vertices in $\mt$\\
  $C^{h}$ & number of horizonal cross-cuts in $\mt$ \\
  $C^{v}$ & number of vertical cross-cuts in $\mt$ \\
  $T^{h}$ & number of horizonal interior l-edges in $\mt$\\
  $T^{v}$ & number of vertical interior l-edges in $\mt$\\
  $n_{e}$ & number of interior l-edges in $\mt$ ($T^{h} + T^{v}$)\\
  $V^{+}$ & number of free-vertices in $\mt$ \\
  $N^{h}$ & minimal integer larger or equal to $\frac{d_{1} + 1}{ d_{1} - \alpha}$\\
  $N^{v}$ & minimal integer larger or equal to $\frac{d_{2} + 1}{ d_{2} - \beta}$ \\
  $n_{c}$ & $(d_{1} - \alpha)(d_{2} - \beta)(V - V^{+})$\\
  $n_{r}$ & $(d_{1}+1)(d_{2}-\beta)T^{h} + (d_{2}+1)(d_{1} - \alpha)T^{v}$ \\
  \hline
  \end{tabular}
\end{table}

\subsection{Spline space over T-mesh}
Given a T-mesh $\mathcal{T}\in \mathbb{R}^2$, let $\mathcal{F}$
denote all the cells in $\mathcal{T}$ and $\Omega$ the region
occupied by all the cells in $\mathcal{T}$. The
bi-degree $(d_{1}, d_2)$ polynomial spline space over T-mesh
$\mathcal{T}$ with smoothness order $\alpha$ and $\beta$ is defined as
\begin{equation} \mathcal{S}(d_{1}, d_{2}, \alpha, \beta, \mathcal{T}) :=
\Big\{ f(x, y) \in C^{\alpha, \beta}(\Omega)\Big| f|_\phi \in
\mathrm{P}_{d_{1}d_{2}}, \forall\phi \in \mathcal{F}\Big\},\nonumber
\end{equation}
where $\mathrm{P}_{d_{1}d_{2}}$ is the space of all the polynomials
with bi-degree $(d_{1}, d_{2})$, and $C^{\alpha, \beta}(\Omega)$ is
the space consisting of all the bivariate functions which are
continuous in $\Omega$ with order $\alpha$ along $x$ direction and
with order $\beta$ along $y$ direction. It is obvious that
$\mathcal{S}(d_{1}, d_{2}, \alpha, \beta, \mathcal{T})$ is a linear
space, which is called the spline space over the given T-mesh
$\mathcal{T}$.

Until now, several articles have been studied to analysis the dimension of
the spline space over some special families of T-meshes.
\begin{itemize}
  \item \textbf{Reduced regularity:} \\
  In 2006, \cite{htspline1} studied the dimension of the spline space
  under the constrains that the order of the smoothness is less than half of the degree of the
  spline functions. According to Theorem 4.2 in \cite{htspline1}, it follows that if
  $d_{1} \geq 2\alpha + 1$ and $d_{2} \geq 2\beta + 1$,
  \begin{align*}
  \label{dim3311} \dim\mathcal{S}(d_{1}, d_{2}, \alpha, \beta,
  \mathcal{T}) = & F(d_{1}+1)(d_{2}+1) - E_{h}(d_{1}+1)(\beta+1) -
  \nonumber \\
  & E_{v}(d_{2} + 1)(\alpha + 1) + V(\alpha+1)(\beta+1),
  \end{align*}
  where $F$, $E_{h}$, $E_{v}$, and $V$ are defined in Table~\ref{tab:1}.
  \cite{Schumaker12_cagd} also proved this result using minimal determining set method.
  And later, \cite{buffa} analysis a special T-spline with reducing regularity using the
  dimension in~\cite{htspline1}.
  \item \textbf{Enough mono-vertices} \\
  In 2006, \cite{LiWaZh06} calculated the dimension of spline space
  over a T-mesh if each interior l-edges have enough mono-vertices. In the T-mesh, if the interior
  of each horizontal interior l-edge has at least $N^{h} - 2$
  mono-vertices and the interior of each vertical interior edge segment has at least
  $N^{v} - 2$ mono-vertices, then the dimension of spline space over the T-mesh is,
  \begin{align*}
  \dim\mathcal{S}(d_{1}, d_{2}, \alpha, \beta, \mathcal{T}) = & (d_{1}+ 1)(d_{2} + 1) +
  (C_{h} - T_{h})(d_{1} + 1)(d_{2} - \beta) + \nonumber \\
  & (C_{v} - T_{v})(d_{2} + 1)(d_{1} - \alpha) + V(d_{1} - \alpha)(d_{2} - \beta),
  \end{align*}
  here $C_{h}$, $C_{v}$, $T_{h}$, $T_{v}$ and $V$ are defined in Table~\ref{tab:1}. However, we will show
  that the condition in this paper is not right.
  \item \textbf{Instability: } \\
  In 2011, \cite{xinli,instablity2} discovered that the dimension of the
  associated spline space is instability over some particular
  T-meshes, i.e, the dimension is not only associated with the
  topological information of the T-mesh but also associated with the
  geometric information of the T-mesh.
  \item \textbf{Analysis-suitable T-splines:}\\
  In 2011, \cite{LiZhSeHuSc10,ScLiSeHu10} provided
  a mildly restricted subset of T-splines, which optimized to meet
  the needs of both design and analysis. And~\cite{LiSc10} compute the dimension of
  the spline space $\mathcal{S}(d,d,d-1,d-1,\mathcal{T})$ if the T-mesh $\mt$ is an extended
  T-mesh of an analysis-suitable T-mesh.
  \item \textbf{Regular T-subdivision:} \\
  \cite{Mourrain} studied the dimension for spline space $\st$
  when the T-mesh is a regular T-subdivision by exploiting homological techniques, which is a special case of the present paper.
  \item \textbf{Special hierarchical T-mesh:} \\
  \cite{mengwu} provided the dimension for spline space $\mathcal{S}(d,d,d-1,d-1,\mathcal{T})$
  over a special hierarchical T-mesh using homological algebra technique.
\end{itemize}

\section{Smoothing cofactor-conformality method}
In this section, we will
review one of the main methods, smooth cofactor-conformality
method introduced in~\cite{wangrh} and~\cite{sch_scm}, for computing the dimension of spline space
over T-meshes.

In the theory of multi-variate splines, in order to calculate the
dimension of a spline space, one first needs to transfer the
smoothness conditions into algebraic forms.

Referring to Figure~\ref{fig:smoothv}, for any interior vertex $v_{i, j} =
(x_i, y_j)$, suppose the four surrounding bi-degree $(d_{1}, d_{2})$ polynomial
patches are $p_{i, j}^{k}(x, y), k = 0, 1, \dots, 3$ respectively (if the vertex $v_{i, j}$ is a T-junction, then some
of the polynomial patches are identical). For example for the left
T-junction in Figure~\ref{fig:smoothv} b, patches $p_{i, j}^{1}(x,
y)$, $p_{i, j}^{2}(x, y)$ are identical.
\begin{figure}[htbp]
\begin{center}
\includegraphics[width=0.9\textwidth]{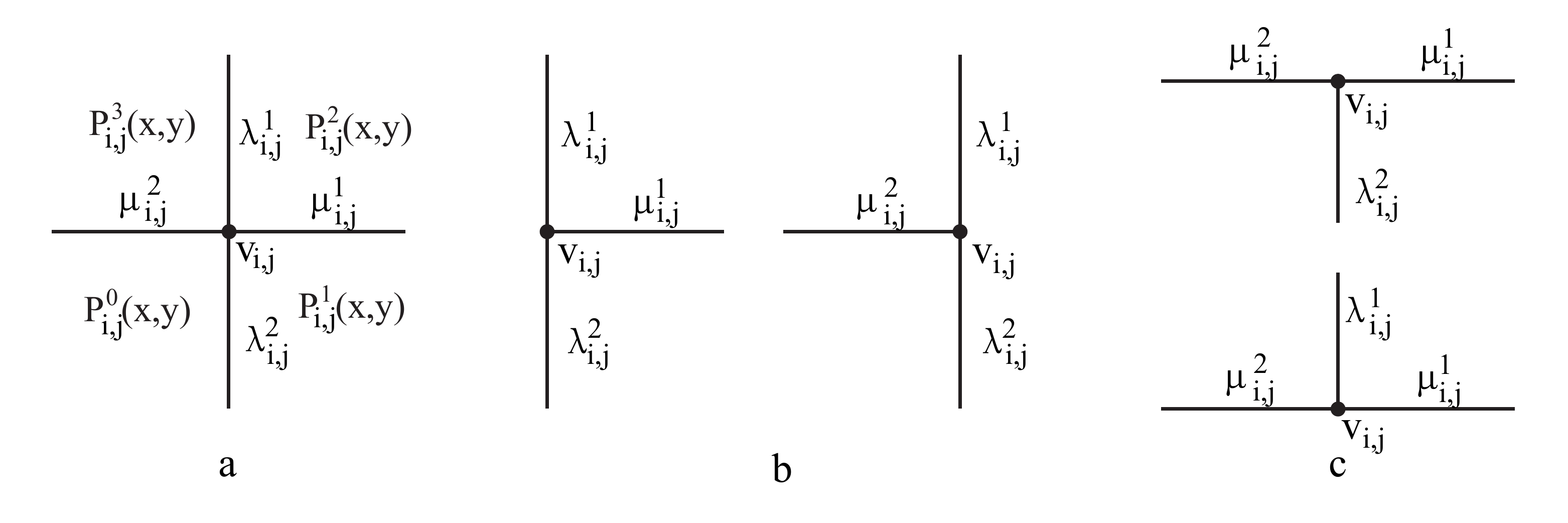}
\end{center}
\caption{smoothing cofactors around a vertex.\label{fig:smoothv}}
\end{figure}

As $p_{i, j}^{0}(x, y)$ and $p_{i, j}^{1}(x, y)$ are $C^{\alpha}$
continuity, so there exists a bi-degree $(d_{1} - \alpha - 1, d_{2})$ polynomial $\lambda_{i,
j}^{2}(y)$ such that
\begin{equation}
\label{equ:0}
p_{i, j}^{1}(x, y) - p_{i, j}^{0}(x, y) = \lambda_{i, j}^{2}(x,y)(x -
x_{i})^{\alpha+1},
\end{equation}
Here $\lambda_{i, j}^{2}(x, y)$ is called \textbf{edge cofactor} for the
common edge of patches $p_{i, j}^{0}(x, y)$ and $p_{i, j}^{1}(x,
y)$. If two patches are identical, then the edge cofactor is zero.

Similarly, there also exist bi-degree $(d_{1} - \alpha - 1, d_{2})$ polynomial $\lambda_{i,
j}^{1}(x, y)$, bi-degree $(d_{1}, d_{2}-\beta-1)$ polynomials $\mu_{i, j}^{1}(x, y)$ and $\mu_{i, j}^{2}(x, y)$, such that
\begin{align}
\label{equ:1}
p_{i, j}^{2}(x, y) - p_{i, j}^{1}(x, y) &= \mu_{i, j}^{1}(x, y)(y - y_{j})^{\beta+1}, \\
\label{equ:2}p_{i, j}^{3}(x, y) - p_{i, j}^{2}(x, y) &= -\lambda_{i, j}^{1}(x, y)(x - x_{i})^{\alpha+1}, \\
\label{equ:3}p_{i, j}^{0}(x, y) - p_{i, j}^{3}(x, y) &= -\mu_{i, j}^{2}(x, y)(y -
y_{j})^{\beta+1}.
\end{align}
Sum with all these equations, we have
\begin{equation}
(\lambda_{i, j}^{1}(x, y) - \lambda_{i, j}^{2}(x, y))(x - x_{i})^{\alpha+1} =
(\mu_{i, j}^{1}(x, y) - \mu_{i, j}^{2}(x, y))(y - y_{j})^{\beta+1}.
\end{equation}

Since $(x - x_{i})^{\alpha+1}$ and $(y - y_{j})^{\beta+1}$ are prime to each
other, so there exist bi-degree $(d_{1}-\alpha-1, d_{2}-\beta-1)$ polynomial $d_{i, j}(x, y)$, such that,
\begin{equation}
\label{equ:edge} \lambda_{i, j}^{1}(x, y) - \lambda_{i, j}^{2}(x, y) =
d_{i, j}(x, y)(y - y_{j})^{\beta+1}, \quad \mu_{i, j}^{1}(x, y) - \mu_{i, j}^{2}(x, y)
= d_{i, j}(x, y)(x - x_{i})^{\alpha+1}.\nonumber
\end{equation}

Let $$d_{i, j}(x, y) = \sum_{p = 0}^{d_{1} - \alpha - 1}\sum_{q = 0}^{d_{2} - \beta - 1}d_{i, j}^{p, q}(x - x_{i})^{p}(y - y_{j})^{q},$$
We call these $d_{i, j}^{p, q}$ \textbf{vertex cofactor}. Denote $\hat{\textbf{d}}_{i, j}$ to be a vector contains all coefficients $d_{i, j}^{p, q}$, i.e.,
$$\hat{\textbf{d}}_{i, j} = (d_{i, j}^{0, 0}, d_{i, j}^{0, 1}, \dots, d_{i, j}^{d_{1} - \alpha - 1, d_{2} - \beta - 2}, d_{i, j}^{d_{1} - \alpha - 1, d_{2} - \beta - 1}).$$

For boundary vertices, there is a little different. Since in this case,
we only have parts of the four equations such as~\eqref{equ:0},\eqref{equ:1},\eqref{equ:2},\eqref{equ:3}. So we don't need
to assign the bi-degree $(d_{1}-\alpha-1, d_{2}-\beta-1)$ polynomial $d_{i, j}(x, y)$ for the boundary vertex. Instead, we
will assign the edge cofactor for the corresponding edge. For example, in Figure~\ref{fig:smoothv1}a, we need two edge cofactors
$\lambda_{i, j}^{1}$ and $\mu_{i, j}^{1}$ and for the figure b, we need only one edge cofactors
$\mu_{i, j}^{1}$.
\begin{figure}[htbp]
\begin{center}
\includegraphics[width=5.0in]{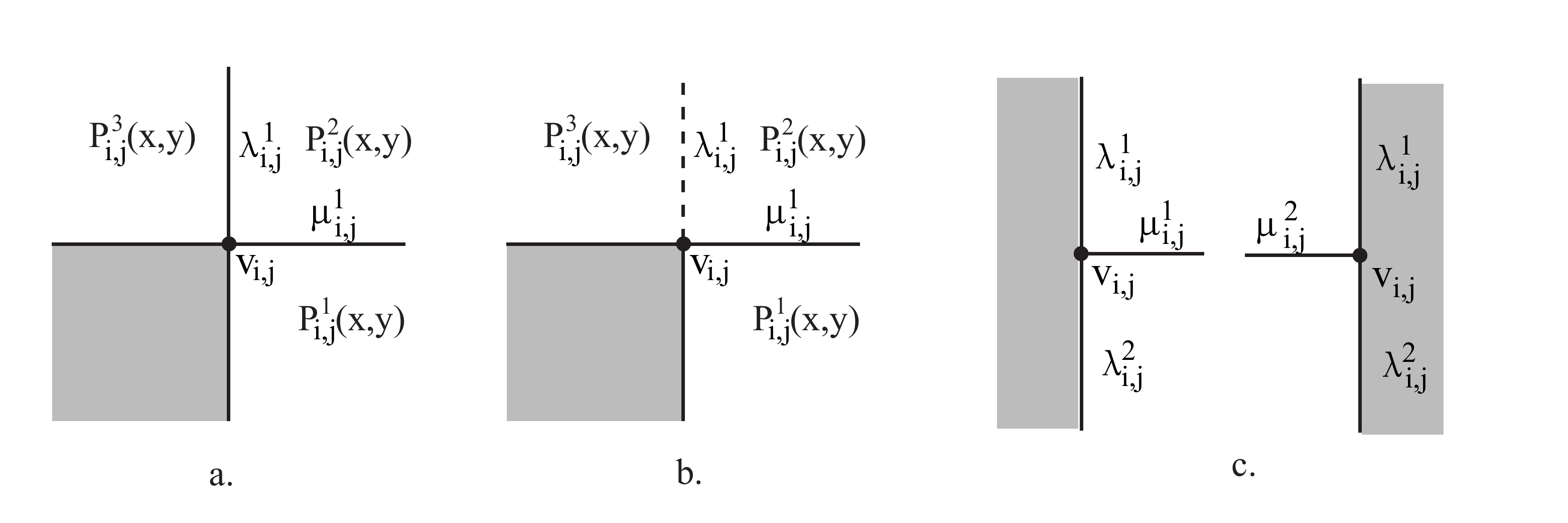}
\end{center}
\caption{smoothing cofactors around a boundary vertex. The grey regions are outside of the T-mesh. \label{fig:smoothv1}}
\end{figure}

The vertex cofactors for the interior vertices are not totally free, there are other
constrains for the continuity condition along each l-edge in
the T-mesh.

\begin{figure}[htbp]
\centering
~\\[-1ex]
\subfigure [a.]{\includegraphics [width=4.0in]{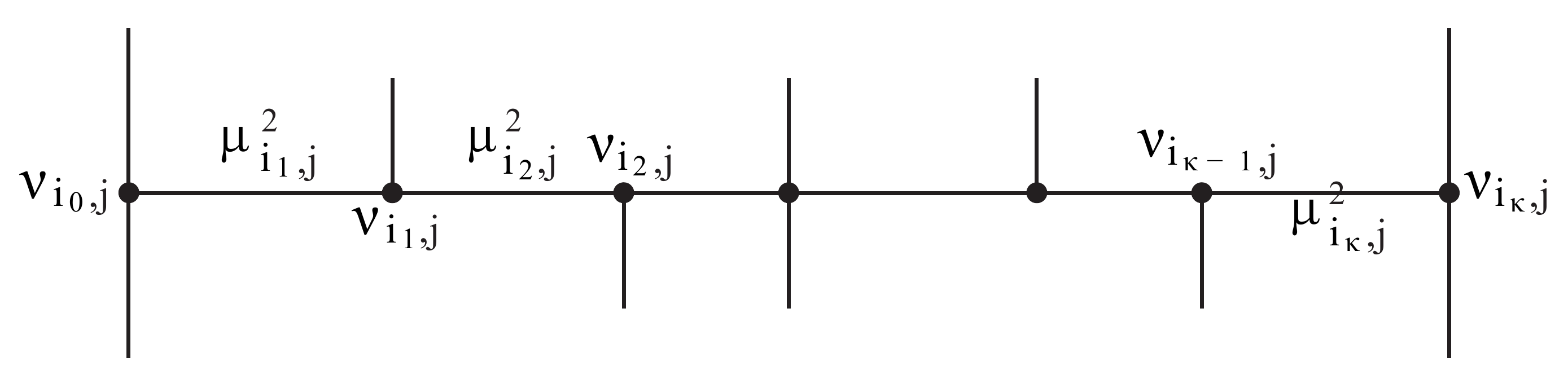}}
~\\[-1ex]
\caption{smoothing cofactors along a horizonal edge
segment.\label{fig:smoothe}}
\end{figure}

We first consider a horizontal \emph{interior l-edge} referring to Figure~\ref{fig:smoothe}
with $k + 1$ vertices $\nu_{i_{t}, j}, t = 0, 1, \dots, k$.
According to equation~\eqref{equ:edge}, we have
\begin{align}
\mu_{i_{0}, j}^{1}(x, y) - 0 &= d_{i_{0}, j}(x, y)( x - x_{i_{0}})^{\alpha+1}, \nonumber\\
\dots \nonumber\\
\label{equ:ledge}\mu_{i_{t}, j}^{1}(x, y) - \mu_{i_{t}, j}^{2}(x, y) &= d_{i_{t}, j}(x, y)( x - x_{i_{t}})^{\alpha+1}, \\
\dots \nonumber\\
0 - \mu_{i_{k}, j}^{2}(x, y) &= d_{i_{k}, j}(x, y)( x - x_{i_{k}})^{\alpha+1}.\nonumber
\end{align}
and $$\mu_{i_{t-1}, j}^{2}(x, y) = \mu_{i_{t}, j}^{1}(x, y).$$ Sum all these
equations, we have the following equation,
\begin{equation}
\label{equ:con0} \sum_{t=0}^{k} d_{i_{t}, j}(x, y)( x - x_{i_{t}})^{\alpha+1} =
0.
\end{equation}
Similarly, the constrains for a vertical interior l-edge is
\begin{equation}
\label{equ:con1} \sum_{t=0}^{l} d_{i, j_{t}}(x, y)( y - y_{j_{t}})^{\beta+1} =
0.
\end{equation}
The above equations are called \textbf{edge conformality
conditions}.

\begin{lemma}\label{lemma:dim}Assume each $x_{i_{t}}$ and $y_{j_{t}}$
are distinct, then the dimensions of solution space of equation~\eqref{equ:con0} and~\eqref{equ:con1}
are $(d_{2}-\beta)(l(d_{1}-\alpha)-d_{1}-1)_{+}$ and $(d_{1}-\alpha)(k(d_{2}-\beta)-d_{2}-1)_{+}$ respectively.
\end{lemma}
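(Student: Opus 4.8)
The plan is to turn each bivariate conformality system into a family of decoupled univariate relations and then compute the dimension by a rank count. I describe the horizontal system \eqref{equ:con0}; the vertical one \eqref{equ:con1} is identical after exchanging $x\leftrightarrow y$, $\alpha\leftrightarrow\beta$ and $d_1\leftrightarrow d_2$, so I will only treat \eqref{equ:con0} and read off \eqref{equ:con1} by symmetry.

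First I would use that all vertices of a horizontal interior l-edge share the ordinate $y_j$. Expanding each vertex cofactor as $d_{i_t,j}(x,y)=\sum_{q=0}^{d_2-\beta-1}g_t^{(q)}(x)\,(y-y_j)^q$ with $\deg g_t^{(q)}\le d_1-\alpha-1$, and noting that the factor $(x-x_{i_t})^{\alpha+1}$ is independent of $y$, the left-hand side of \eqref{equ:con0} becomes a polynomial in $(y-y_j)$ whose coefficients are the expressions $\sum_t g_t^{(q)}(x)(x-x_{i_t})^{\alpha+1}$. Since the powers $(y-y_j)^q$ are linearly independent, \eqref{equ:con0} holds if and only if each of these coefficients vanishes, so the system splits into $d_2-\beta$ independent copies of the single univariate relation $\sum_{t}g_t(x)(x-x_{i_t})^{\alpha+1}=0$, $\deg g_t\le d_1-\alpha-1$. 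The total dimension is therefore $(d_2-\beta)$ times the dimension of the univariate solution space, and it remains to compute the latter.

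For the univariate problem I would introduce the linear map $\Phi\colon(g_0,\dots)\mapsto\sum_t g_t(x)(x-x_{i_t})^{\alpha+1}$ into $\mathbb{R}[x]_{\le d_1}$, whose kernel is exactly the univariate solution space. If the l-edge carries $M$ vertices then $\Phi$ has domain of dimension $M(d_1-\alpha)$ and codomain of dimension $d_1+1$, so by rank--nullity the solution space has dimension $M(d_1-\alpha)-\operatorname{rank}\Phi$. The whole computation thus reduces to showing that $\Phi$ has the largest rank its dimensions permit, namely $\operatorname{rank}\Phi=\min\{M(d_1-\alpha),\,d_1+1\}$. Granting this, the univariate solution space has dimension $\big(M(d_1-\alpha)-(d_1+1)\big)_+$, and multiplying by $d_2-\beta$ gives the asserted value (with $M$ playing the role of $l$ in \eqref{equ:con0}, and symmetrically $k$ in \eqref{equ:con1}).

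The full-rank statement is the heart of the matter and the only place the hypothesis of distinct $x_{i_t}$ is used. I would prove it by building the image one vertex at a time: writing $V_t=(x-x_{i_t})^{\alpha+1}\mathbb{R}[x]_{\le d_1-\alpha-1}$ and $W=V_0+\dots+V_{t-1}$, the new summand intersects $W$ in exactly those elements of $W$ that vanish to order $\alpha+1$ at $x_{i_t}$, so $\dim(V_t\cap W)=\dim W-\operatorname{rank}\big(w\mapsto(w(x_{i_t}),\dots,w^{(\alpha)}(x_{i_t}))\,|_W\big)$. The claim then amounts to showing that these $\alpha+1$ order-$\le\alpha$ Hermite conditions at the new node cut down the accumulated span by the full $\min\{\alpha+1,\dim W\}$, so that $\dim(V_t+W)$ increases by the full $d_1-\alpha$ at every step until it saturates $\mathbb{R}[x]_{\le d_1}$. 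Because $x_{i_t}$ differs from the earlier nodes, $(x-x_{i_s})^{\alpha+1}$ has an invertible jet at $x_{i_t}$, and in the reduced-regularity range $d_1\ge2\alpha+1$ a single earlier block $V_s$ already surjects onto the Hermite data at $x_{i_t}$; the genuinely delicate case is high smoothness ($d_1<2\alpha+1$), where one must combine several blocks and the independence is equivalent to the non-vanishing of a generalized (confluent) Vandermonde determinant, which factors into a nonzero constant times powers of the differences $x_{i_s}-x_{i_t}$ and hence is non-zero for \emph{every} configuration of distinct nodes --- not merely a generic one. Establishing this determinant identity, uniformly in all the parameters $d_1,\alpha,M$, is the step I expect to require the most care.
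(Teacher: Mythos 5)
Your proposal cannot be compared against the paper's argument line by line for the simple reason that the paper does not contain one: its ``proof'' of Lemma~\ref{lemma:dim} is a one-line citation of~\cite{LiWaZh06}. What you have written is therefore strictly more informative, and its structure is correct. The decoupling of \eqref{equ:con0} into $d_{2}-\beta$ independent copies of the univariate relation $\sum_{t}g_{t}(x)(x-x_{i_{t}})^{\alpha+1}=0$ is valid precisely because all vertices of a horizontal l-edge share the ordinate $y_{j}$, and the rank--nullity count then correctly reduces the lemma to the claim $\operatorname{rank}\Phi=\min\{M(d_{1}-\alpha),\,d_{1}+1\}$ for distinct nodes. (Your neutral letter $M$ for the number of vertices also quietly repairs the paper's own index garbling: \eqref{equ:con0} is written with $k+1$ vertices while the stated formula uses $l$.) The one place you stop short is the full-rank claim itself, which you rightly isolate as the crux but leave as ``the step requiring the most care.'' It is true for \emph{every} configuration of distinct nodes, and you can avoid wrestling with the confluent Vandermonde determinant altogether: the kernel of $\Phi$ is isomorphic to the space of compactly supported $C^{\alpha}$ splines of degree $d_{1}$ on the knot sequence $x_{i_{0}}<\dots<x_{i_{M-1}}$ with each knot of multiplicity $d_{1}-\alpha$, via $(g_{t})\mapsto\sum_{t}g_{t}(x)(x-x_{i_{t}})_{+}^{\alpha+1}$ (the sum vanishes to the left of $x_{i_{0}}$ by truncation and to the right of $x_{i_{M-1}}$ by the conformality relation, and injectivity follows because the $g_{t}$ are recovered from the derivative jumps). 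By the Curry--Schoenberg theorem that space has a B-spline basis of exactly $\bigl(M(d_{1}-\alpha)-d_{1}-1\bigr)_{+}$ elements, which is the desired kernel dimension. If you prefer to finish your block-by-block induction instead, the statement $\operatorname{rank}(J|_{W})=\min\{\alpha+1,\dim W\}$ dualizes to the linear independence of Hermite evaluation functionals of order $\le\alpha$ at distinct nodes, and in the square case the relevant determinant is a resultant of the polynomials $(x-x_{i_{s}})^{\alpha+1}$, hence a nonzero constant times a product of powers of the differences $x_{i_{s}}-x_{i_{t}}$; either route uses the distinctness hypothesis exactly once and in the same place. In short: correct approach, correct reduction, and one classical fact left to quote or prove --- which still puts your write-up ahead of the paper's.
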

\begin{proof}
See~\cite{LiWaZh06} for more details.
\end{proof}

\begin{remark}If the number of the vertices in a horizontal l-edge is less than
$N^{h}$, or the number of the vertices in a vertical l-edge is less than
$N^{v}$, then the l-edge will not contribute the dimension of the spline space, i.e.,
we can delete the l-edge without altering the spline space. Thus, we called such l-edge \textbf{vanished l-edge}.
In the following, we assume that all l-edges are not vanished l-edges.
\end{remark}

Now we consider a horizontal ray with $r+1$ vertices $\nu_{i_{t}, j}, t = 0, 1, \dots, k$. Without
loss generalization, we assume $\nu_{i_{0}, j}$ is a boundary vertex. According to equation~\eqref{equ:edge}, we have
\begin{align}
\mu_{i_{1}, j}^{1}(x, y) - \mu_{i_{1}, j}^{2}(x, y) &= d_{i_{1}, j}(x, y)( x - x_{i_{1}})^{\alpha+1},, \nonumber\\
\dots \nonumber\\
\label{equ:ray}\mu_{i_{t}, j}^{1}(x, y) - \mu_{i_{t}, j}^{2}(x, y) &= d_{i_{t}, j}(x, y)( x - x_{i_{t}})^{\alpha+1}, \\
\dots \nonumber\\
0 - \mu_{i_{k}, j}^{2}(x, y) &= d_{i_{k}, j}(x, y)( x - x_{i_{k}})^{\alpha+1}.\nonumber
\end{align}
and $$\mu_{i_{t-1}, j}^{2}(x, y) = \mu_{i_{t}, j}^{1}(x, y).$$ Sum all these
equations, we have the following equation,
\begin{equation}
\sum_{t=1}^{k} d_{i_{t}, j}(x, y)( x - x_{i_{t}})^{\alpha+1} =
\mu_{i_{1}, j}^{2}(x, y).
\end{equation}
The main different of the constraint from \eqref{equ:con0} and \eqref{equ:con1} is that these is one
edge cofactor $\mu_{i_{1}, j}^{2}(x, y)$ in the constraints. For any $d_{i_{t}, j}(x, y)$ assigned for
the interior vertices, we can assign $\mu_{i_{1}, j}^{2}(x, y)$ as $\sum_{t=1}^{k} d_{i_{t}, j}(x, y)( x - x_{i_{t}})^{\alpha+1}$
to satisfy the constraint.

For a horizontal cross-cut in the T-mesh, since it has two boundary vertices, we can conclude that it
has $(d_{1}+1)(d_{2}-\beta)$ degrees of freedoms. Similarly, a vertical cross-cut has $(d_{2}+1)(d_{1}-\alpha)$
degrees of freedoms.

The linear systems as \eqref{equ:con0} or \eqref{equ:con1}, associated with each interior l-edges can be assembled into
a global system as $\m{\bf x}=0$. Here $\m$ is a $n_{r}\times n_{c}$ matrix, which is
called \textbf{conformality conditions matrix}. And ${\bf x}$ is a column vector whose
elements are all the vertex cofactors for the interior vertices in the T-mesh.

And according to the above anlaysis, we can get the dimension for spline space over any general T-mesh without holes according to
smoothing cofactor-conformality method which is stated in the following theorem.
\begin{theorem}\label{the:dim1}Given a T-mesh $\mathcal{T}$ which has no vanished l-edges and holes,
let matrix $\m$ be the conformality conditions matrix, then the
dimension of spline space over the T-mesh is,
\begin{align*}
\dim\mathcal{S}(d_{1}, d_{2}, \alpha, \beta, \mathcal{T}) = & (d_{1}
+ 1)(d_{2} + 1) + C^{h}(d_{1} + 1)(d_{2} - \beta) + \nonumber \\
& C^{v}(d_{2} + 1)(d_{1} - \alpha) + V^{+}(d_{1} - \alpha)(d_{2} - \beta) + \dim \m,
\end{align*}
\end{theorem}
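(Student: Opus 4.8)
The plan is to exhibit an explicit linear isomorphism between $\mathcal{S}(d_{1},d_{2},\alpha,\beta,\mathcal{T})$ and the space of admissible smoothing-cofactor data, and then to compute the dimension of the latter by decoupling the conformality conditions according to the type of each l-edge. First I would fix one reference cell $\phi_{0}$ and send each spline $f$ to the pair consisting of its restriction $p_{0}=f|_{\phi_{0}}$, a bi-degree $(d_{1},d_{2})$ polynomial, together with the collection of all edge and vertex cofactors produced by \eqref{equ:0}--\eqref{equ:edge}. This map is linear; the reference polynomial accounts for the leading term $(d_{1}+1)(d_{2}+1)$, so the remaining task is to count the free parameters hidden in the cofactor data subject to the conformality conditions.

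Next I would argue that the global conformality system splits into independent blocks, one per l-edge, so that the count is additive. For a horizontal (resp.\ vertical) cross-cut both end vertices are boundary vertices, so the associated condition carries no constraint and the jump across it is a free edge cofactor of bi-degree $(d_{1},d_{2}-\beta-1)$ (resp.\ $(d_{1}-\alpha-1,d_{2})$); this yields the $C^{h}(d_{1}+1)(d_{2}-\beta)$ and $C^{v}(d_{2}+1)(d_{1}-\alpha)$ terms. For a ray the computation following \eqref{equ:ray} shows that its single boundary edge cofactor can always be chosen to absorb the contributions of the ray's interior vertex cofactors, so a ray imposes no constraint and introduces no independent parameter of its own. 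A free-vertex lies only on cross-cuts and rays, hence its vertex cofactor $d_{i,j}$ is subject to no conformality condition and contributes a full $(d_{1}-\alpha)(d_{2}-\beta)$ free coefficients, giving the term $V^{+}(d_{1}-\alpha)(d_{2}-\beta)$.

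What remains are the vertex cofactors attached to the $V-V^{+}$ non-free interior vertices, which are precisely the $n_{c}$ unknowns arranged in $\mathbf{x}$; the conformality conditions \eqref{equ:con0} and \eqref{equ:con1} along the interior l-edges are exactly the $n_{r}$ homogeneous equations $\mathcal{M}\mathbf{x}=0$. Their solution space has dimension $\dim\mathcal{M}$, contributing the final summand, and adding the five blocks gives the claimed formula. To make the additivity rigorous I would verify that these parameter groups are genuinely independent, e.g.\ that a cross-cut's free edge cofactor is not re-counted among the vertex cofactors at its crossing vertices, and that the free-vertex cofactors do not appear as columns of $\mathcal{M}$.

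The hard part will be proving that the cofactor-to-spline correspondence is a genuine bijection rather than merely a well-defined map. Injectivity is immediate, since the cofactors are differences of polynomial pieces. Surjectivity, however, requires reconstructing a single-valued spline by integrating the cofactor data cell by cell: the reconstructed piece on a cell must be independent of the path taken from $\phi_{0}$. The vertex relation \eqref{equ:edge} guarantees consistency around each individual interior vertex, and because $\mathcal{T}$ is regular and has no holes its interior is simply connected, so path-independence around every closed loop reduces to the local vertex consistency. This is exactly where the hypotheses \emph{regular} and \emph{without holes} are essential, and establishing this global integrability---together with the decoupling of the conformality system into the per-l-edge blocks above---is the crux of the argument.
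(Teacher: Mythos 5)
Your proposal follows essentially the same route as the paper: Theorem~\ref{the:dim1} is stated there as a direct consequence of the preceding Section~3 analysis (reference polynomial giving $(d_{1}+1)(d_{2}+1)$, one free edge cofactor per cross-cut, rays absorbed by their boundary edge cofactor, free-vertex cofactors unconstrained, and the kernel of $\mathcal{M}$ for the remaining vertex cofactors), and your block-by-block count reproduces exactly that decomposition. Your outline is in fact somewhat more careful than the paper, which offers no formal proof and silently assumes the cofactor-to-spline reconstruction step whose path-independence (via regularity and simple connectivity) you correctly single out as the crux.
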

\section{Diagonalizable}
Theorem~\ref{the:dim1} indicates that the main difficult to compute the
dimension of spline space over T-mesh is to calculate the rank of conformality
condition matrix $\m$. It is obvious that the structure of the matrix is associated with the order of edge
conformality conditions and it is also associated with the order of
vertices cofactors. Most of existing method study the dimension of the matrix by forcing the T-mesh to be nest structure,
such as T-subdivision, hierarchical. However, nesting structure should be fine for application, but
it is hiding some essential properties for T-mesh. What we wish to answer is that under what condition can we
compute the dimension regardless the knot intervals for a given T-mesh and why such condition is important.

\begin{definition}
Given a T-mesh $\mt$, suppose we order the all the interior l-edges
as $e_{i_{j}}, j = 1, 2, \dots, n_{e}$, then we can compute the \textbf{new-vertex-vector} $\nu^{i}$.
Here $\nu^{i}_{1}$ is the number of vertices on l-edge $e_{i_{1}}$ and $\nu^{i}_{j}$ is the number
of vertices on l-edge $e_{i_{j}}$ but not on l-edges $e_{i_{k}}, k < j$.
\end{definition}
\begin{definition}\label{definition1}A T-mesh is called
\textbf{Diagonalizable} if there exists an order of l-edges $e_{i_{j}}, j = 1, 2, \dots, n_{e}$
such that the \emph{new-vertex-vector} $\nu^{i}$ satisfies that $\nu_{j}^{i} \geq N^{h}$ if $e_{i_{j}}$
is horizonal and $\nu_{j}^{i} \geq N^{v}$ if $e_{i_{j}}$ is vertical.
\end{definition}

\begin{lemma}\label{lemma:four}If a T-mesh is diagonalizable, then the matrix
$\m$ has full column rank regardless the knot intervals.
\end{lemma}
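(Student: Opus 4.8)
The plan is to show that the $n_{r}$ conformality conditions assembled as the rows of $\mathcal{M}$ are linearly independent, i.e. that $\mathcal{M}$ attains its maximal rank $n_{r}$; this is the statement that makes the nullity $\dim\mathcal{M}=n_{c}-n_{r}$ in Theorem~\ref{the:dim1} knot-independent, and it is what full rank of $\mathcal{M}$ should mean here (note $n_{c}>n_{r}$ already for a single over-subdivided l-edge, so it is the rows that must be independent). I would first record the block structure: group the rows by interior l-edge $e$ and the columns by non-free interior vertex $v$, so that the block $\mathcal{M}_{e,v}$ is nonzero precisely when $v$ lies on $e$, where it is the matrix of $\hat{\mathbf{d}}_{v}\mapsto d_{v}(x,y)(x-x_{v})^{\alpha+1}$ for a horizontal $e$ (and the analogous $y$-map for a vertical $e$). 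The only analytic input I need is knot-independent and already available: for a horizontal l-edge carrying $m\ge N^{h}$ vertices with distinct abscissae, the associated conformality map is onto the condition space, i.e. the block has full row rank $(d_{1}+1)(d_{2}-\beta)$. This is the rank reformulation of Lemma~\ref{lemma:dim}, whose nullity $(d_{2}-\beta)(m(d_{1}-\alpha)-d_{1}-1)_{+}$ leaves no cokernel exactly when $m\ge N^{h}$, and which requires only distinctness of the $x_{v}$; the vertical case is symmetric with $N^{v}$.

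Next I would use diagonalizability to put $\mathcal{M}$ in block-triangular form. Fix an order $e_{i_{1}},\dots,e_{i_{n_{e}}}$ realizing Definition~\ref{definition1}, and for each $j$ let $S_{j}$ be the set of new vertices of $e_{i_{j}}$, those lying on $e_{i_{j}}$ but on no earlier l-edge, so that $|S_{j}|=\nu_{j}^{i}\ge N^{h}$ (resp. $N^{v}$). The crucial combinatorial observation is that the columns indexed by $S_{j}$ are identically zero in every earlier block row $e_{i_{k}}$ with $k<j$, since no vertex of $S_{j}$ lies on $e_{i_{k}}$. This is the entire role of the new-vertex-vector: it isolates, for each l-edge, a private sub-block $\mathcal{M}_{e_{i_{j}},S_{j}}$, which is itself the conformality block of an l-edge with $\ge N^{h}$ vertices at distinct abscissae and hence has full row rank by the fact recorded above.

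To conclude I would prove row independence by reverse-order peeling. Suppose $\sum_{j}\mathbf{c}_{j}^{\top}\mathcal{M}_{e_{i_{j}},\cdot}=0$. Restricting to the columns in $S_{n_{e}}$ kills every term except $j=n_{e}$, since those vertices lie on no other l-edge, leaving $\mathbf{c}_{n_{e}}^{\top}\mathcal{M}_{e_{i_{n_{e}}},S_{n_{e}}}=0$ and forcing $\mathbf{c}_{n_{e}}=0$ by full row rank of the diagonal block. Inductively, having shown $\mathbf{c}_{m}=0$ for all $m>j$, I restrict to the columns $S_{j}$: each earlier l-edge ($k<j$) contributes zero because $S_{j}$ avoids it, each later l-edge contributes zero because its coefficient already vanishes, and the diagonal block again has full row rank, so $\mathbf{c}_{j}=0$. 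Thus all $\mathbf{c}_{j}$ vanish and the rows of $\mathcal{M}$ are independent; since the rank of each diagonal block depends on the knots only through their distinctness, the conclusion holds regardless of the knot intervals. The main difficulty I anticipate is not the linear algebra but the bookkeeping behind the isolation step: verifying that every vertex counted by $\nu_{j}^{i}$ genuinely carries a vertex cofactor (so $S_{j}$ indexes honest columns) and that "new vertex'' behaves correctly at mono-vertices and at crossings shared by l-edges of either orientation in a regular T-mesh; both are governed by the definitions but should be checked against the degenerate configurations the mesh allows.
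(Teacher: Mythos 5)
Your proof is correct and follows essentially the same route as the paper: reverse the diagonalizing order of the l-edges, use the new-vertex sets $S_{j}$ to obtain a block-triangular structure, and invoke Lemma~\ref{lemma:dim} for the full row rank of each single-l-edge diagonal block (the paper packages this as an invertible $n_{r}\times n_{r}$ submatrix $\m_1$ rather than your row-peeling induction, but the content is identical). Your reading of the conclusion as full \emph{row} rank (rank $n_{r}$, hence nullity $n_{c}-n_{r}$) is also exactly what the paper's proof establishes and what Theorem~\ref{the:dim2} uses, despite the lemma's wording.
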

\begin{proof}
Since we assume that the T-mesh has no vanished l-edges, the matrix $\m$ has more columns than
rows, i.e., $n_{c} \leq n_{r}$. After
arranging the order of edge conformality conditions and the order of
vertex cofactors, an appropriate partition of the linear system of
constraints is
 \begin{equation}
    \left[\begin{array}{c|c}
        \m_1 & \m_2
      \end{array}\right]
    \left[
      \begin{array}{c}
        \textbf{x}_{1} \vspace{2pt} \\
        \hline \vspace{-10pt} \\
        \textbf{x}_2
      \end{array}\right] =
    \mathbf{0}
  \end{equation}
where $\m_1$ is a $n_{r} \times n_{r}$ matrix and $\m_2$ is a
$n_{r} \times (n_{c} - n_{r})$ matrix,
$\textbf{x}_1^T$ is a vector of the first
$n_{r}$ vertex cofactors, and $\textbf{x}_2$ is
a vector of the remaining vertex cofactors.

Since the T-mesh is diagonalizable, so there exist the order for the interior
l-edges satisfy the condition stated in definition~\ref{definition1}. Without loss of generalization, we assume the order
for the l-edges are $e_{i}, i = 1,2, \ldots, n_{e}$. Then we
arrange the order of the edge conformality conditions corresponding
to l-edge from $e_{n_{e}}$ to $e_{1}$. The order of the vertex
cofactors can be placed in the following fashion. For l-edge $e_{n_{e}}$, if it is
a horizonal l-edge, then there exist $N^{h}$ vertices which are not appeared in
the other l-edges. Each vertex corresponds $(d_{1} - \alpha)\times(d_{2} - \beta)$ cofactors, so these
$N^{h}$ vertices correspond $(d_{1} - \alpha)\times(d_{2} - \beta)N^{h} \geq (d_{1} + 1)(d_{2}-\beta)$ cofactors. Select
any $(d_{1} + 1)(d_{2}-\beta)$ cofactors and put in the beginning of $\textbf{x}_1^T$. If the l-edge is a vertical
l-edge, we will select $(d_{2}+1)(d_{1} - \alpha)$ cofactors and put them in the beginning of $\textbf{x}_1^T$. These
process can be applied for the remaining l-edges. And then the matrix $\m_1$
is in upper block triangular form and according to
Lemma~\ref{lemma:dim} each diagonal block $(d_{2}-\beta)(d_{1} + 1)\times(d_{2}-\beta)(d_{1} + 1)$ or
$(d_{1}-\alpha)(d_{2} + 1)\times(d_{1}-\alpha)(d_{2} + 1)$ matrix is full rank, thus matrix $\m_1$ is obviously of full
rank.
\end{proof}

\begin{theorem}\label{the:dim2}Suppose T-mesh $\mathcal{T}$ is diagonalizable and
has no vanished l-edges and holes, then the
dimension of spline space over the T-mesh is,
\begin{align*}
\dim\mathcal{S}(d_{1}, d_{2}, \alpha, \beta, \mathcal{T}) = & (d_{1}
+ 1)(d_{2} + 1) + (C^{h} - T^{h})(d_{1} + 1)(d_{2} - \beta) + \nonumber \\
& (C^{v} - T^{v})(d_{2} + 1)(d_{1} - \alpha) + V(d_{1} - \alpha)(d_{2} - \beta).
\end{align*}
\end{theorem}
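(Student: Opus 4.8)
The plan is to combine the general dimension formula of Theorem~\ref{the:dim1} with the full-rank property established in Lemma~\ref{lemma:four}. Theorem~\ref{the:dim1} already expresses the dimension as
\[
(d_{1}+1)(d_{2}+1) + C^{h}(d_{1}+1)(d_{2}-\beta) + C^{v}(d_{2}+1)(d_{1}-\alpha) + V^{+}(d_{1}-\alpha)(d_{2}-\beta) + \dim\m,
\]
so the entire task reduces to evaluating $\dim\m$ (the dimension of the null space of the conformality matrix) under the diagonalizability hypothesis, and then reconciling the resulting expression with the claimed formula. First I would invoke Lemma~\ref{lemma:four}: since $\mt$ is diagonalizable, $\m$ has full column rank, hence its null space is trivial and $\dim\m = n_{c} - \mathrm{rank}(\m)$ must be recomputed carefully. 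Actually, the relevant quantity $\dim\m$ in Theorem~\ref{the:dim1} denotes the dimension of the solution space $\m\mathbf{x}=0$; full column rank forces this to be $n_{c} - n_{c} = 0$ only if $\m$ were square, but $\m$ is $n_{r}\times n_{c}$ with $n_{c}\le n_{r}$, so full column rank gives solution-space dimension exactly $n_{c} - \mathrm{rank}(\m) = n_{c} - n_{c} = 0$. Thus the cofactor system has only the trivial solution coming from the free-vertex/cross-cut degrees of freedom, and the $\dim\m$ term collapses.

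The substantive combinatorial step is then to account for the interior l-edge contributions. The key identity I would establish is that the number of interior vertices satisfies
\[
V = V^{+} + (\text{mono-vertices on interior l-edges}),
\]
and that summing the degrees of freedom contributed by each interior l-edge, via the per-edge counts $(d_{1}+1)(d_{2}-\beta)$ for horizontal and $(d_{2}+1)(d_{1}-\alpha)$ for vertical l-edges (the block sizes appearing in Lemma~\ref{lemma:four}), reproduces exactly the correction terms $-T^{h}(d_{1}+1)(d_{2}-\beta)$ and $-T^{v}(d_{2}+1)(d_{1}-\alpha)$ together with the replacement of $V^{+}$ by $V$. Concretely, I would argue that each interior l-edge, through its full-rank diagonal block, consumes $(d_{1}+1)(d_{2}-\beta)$ (resp.\ $(d_{2}+1)(d_{1}-\alpha)$) cofactor coordinates, which is precisely the discrepancy between treating all $V$ interior vertices as fully free (contributing $V(d_{1}-\alpha)(d_{2}-\beta)$) versus only the $V^{+}$ free-vertices. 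A careful bookkeeping of how the new-vertex-vector partitions the interior vertices should make these two accountings match term by term.

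The main obstacle will be the bookkeeping in this last step: I must verify that the \emph{new-vertex-vector} decomposition used in Lemma~\ref{lemma:four}—which assigns each interior vertex to exactly one l-edge (the earliest one in the chosen order passing through it)—induces a clean partition whose per-edge tallies sum correctly, and that mono-vertices shared between an interior l-edge and a cross-cut or ray are attributed consistently. In particular I would need to confirm that the $N^{h}$ (resp.\ $N^{v}$) ``new'' vertices guaranteed on each l-edge, each carrying $(d_{1}-\alpha)(d_{2}-\beta)$ cofactors, supply at least the $(d_{1}+1)(d_{2}-\beta)$ (resp.\ $(d_{2}+1)(d_{1}-\alpha)$) coordinates absorbed by the diagonal block, with the surplus cofactors flowing into $\mathbf{x}_{2}$ and thereby into the free degrees of freedom already counted. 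Once this identity between $V(d_{1}-\alpha)(d_{2}-\beta)$ and $V^{+}(d_{1}-\alpha)(d_{2}-\beta) + n_{r}$ (the total row count) is checked, substituting $\dim\m=0$ and simplifying yields the stated formula directly.
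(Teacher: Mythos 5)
There is a genuine error at the crux of your argument: you conclude that $\dim\m=0$. The quantity $\dim\m$ in Theorem~\ref{the:dim1} is the dimension of the solution space of $\m\mathbf{x}=0$, and what Lemma~\ref{lemma:four} actually establishes (its wording ``full column rank'' and ``$n_c\le n_r$'' notwithstanding --- the proof exhibits an invertible $n_r\times n_r$ block $\m_1$ inside the $n_r\times n_c$ matrix with $n_c\ge n_r$) is that $\mathrm{rank}\,\m=n_r$, i.e.\ full \emph{row} rank. Hence the nullity is $n_c-n_r$, which is strictly positive in general: the components $\mathbf{x}_2$ are free and $\mathbf{x}_1=-\m_1^{-1}\m_2\mathbf{x}_2$. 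Your own remark that ``the surplus cofactors flow into $\mathbf{x}_2$ and thereby into the free degrees of freedom'' already describes a nontrivial null space and contradicts your claim $\dim\m=0$. The paper's proof is then a two-line substitution: put $\dim\m=n_c-n_r=(d_1-\alpha)(d_2-\beta)(V-V^+)-(d_1+1)(d_2-\beta)T^h-(d_2+1)(d_1-\alpha)T^v$ into Theorem~\ref{the:dim1}; the $V^+$ terms cancel to give $V(d_1-\alpha)(d_2-\beta)$ and the $T^h,T^v$ terms combine with $C^h,C^v$. No further combinatorial bookkeeping is needed.

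Because you start from $\dim\m=0$, the reconciliation you then attempt forces the identity $V(d_1-\alpha)(d_2-\beta)=V^+(d_1-\alpha)(d_2-\beta)+n_r$, i.e.\ $n_c=n_r$, which fails for essentially every diagonalizable T-mesh (each l-edge is only required to have \emph{at least} $N^h$ or $N^v$ new vertices, so $n_c$ typically exceeds $n_r$). The proposed partition of interior vertices among l-edges therefore cannot ``make the two accountings match term by term,'' and that step of the plan would collapse. The fix is simply to read off the correct rank from the proof of Lemma~\ref{lemma:four} rather than from its (mis-stated) conclusion.
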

\begin{proof}As T-mesh $\mt$ is diagonalizable, so $$\dim \m = n_{c} - n_{r} = (d_{1} - \alpha)(d_{2} - \beta)(V - V^{+}) -
(d_{1}+1)(d_{2}-\beta)E^{h} - (d_{2}+1)(d_{1}-\alpha)E^{v}.$$

According to theorem~\ref{the:dim1}, we have the dimension $\dim\mathcal{S}(d_{1}, d_{2}, \alpha, \beta, \mathcal{T})$ is
\begin{align*}
dim=& (d_{1} + 1)(d_{2} + 1) + C^{h}(d_{1} + 1)(d_{2} - \beta) + C^{v}(d_{2} + 1)(d_{1} - \alpha) + V^{+}(d_{1} - \alpha)(d_{2} - \beta) + \nonumber\\
&(d_{1} - \alpha)(d_{2} - \beta)(V - V^{+}) - (d_{1}+1)(d_{2}-\beta)E^{h} - (d_{2}+1)(d_{1}-\alpha)E^{v} \nonumber \\
=&(d_{1} + 1)(d_{2} + 1) + (C^{h} - T^{h})(d_{1} + 1)(d_{2} - \beta) + (C^{v} - T^{v})(d_{2} + 1)(d_{1} - \alpha) + \nonumber \\
&V(d_{1} - \alpha)(d_{2} - \beta).
\end{align*}
\end{proof}

Now we provide several examples for dimension of spline space over the following T-meshes.
\begin{figure}[htbp]
\centering
~\\[-1ex]
\subfigure [a.]{\includegraphics [width=2.3in]{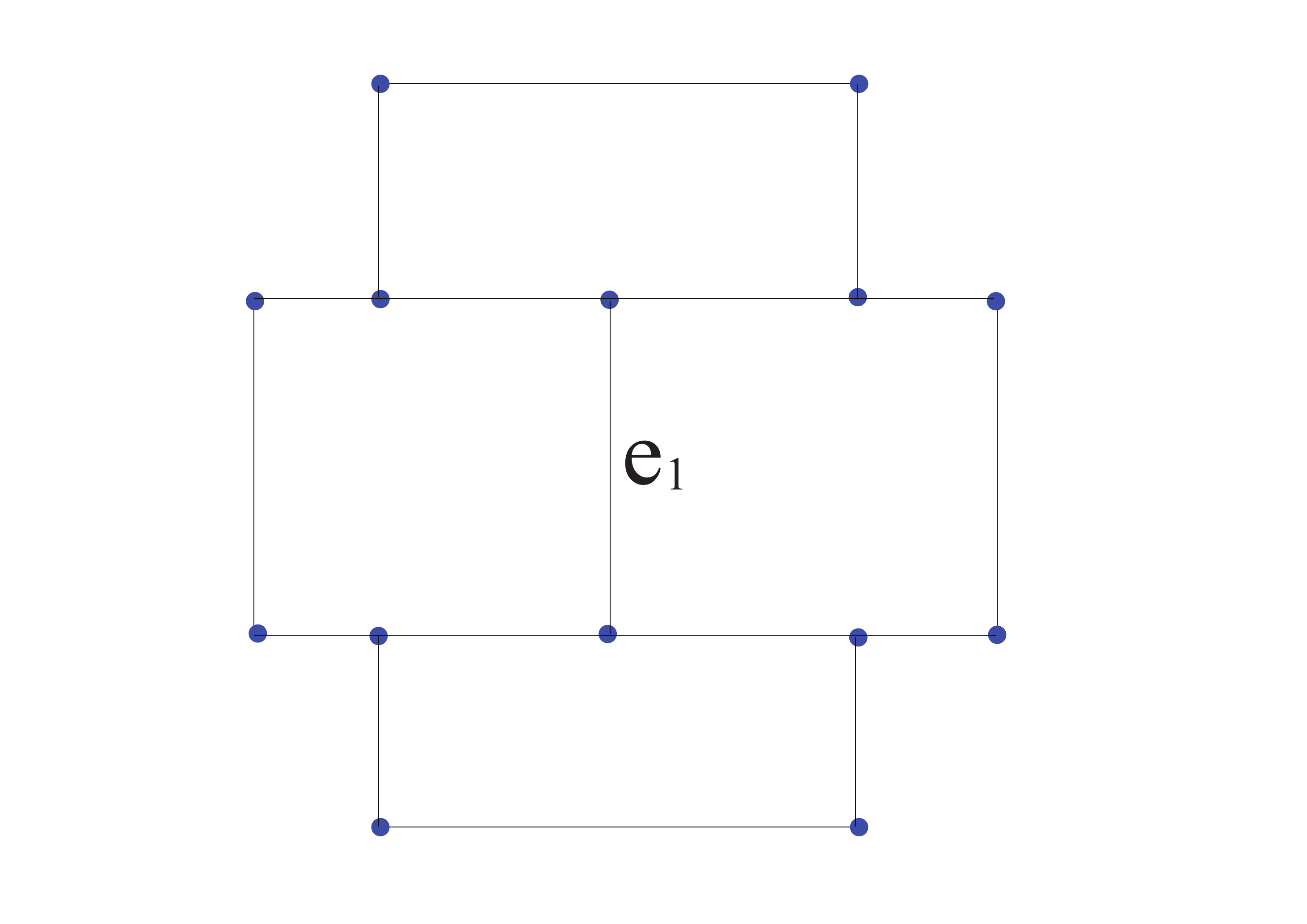}}
\subfigure [b.]{\includegraphics [width=2.3in]{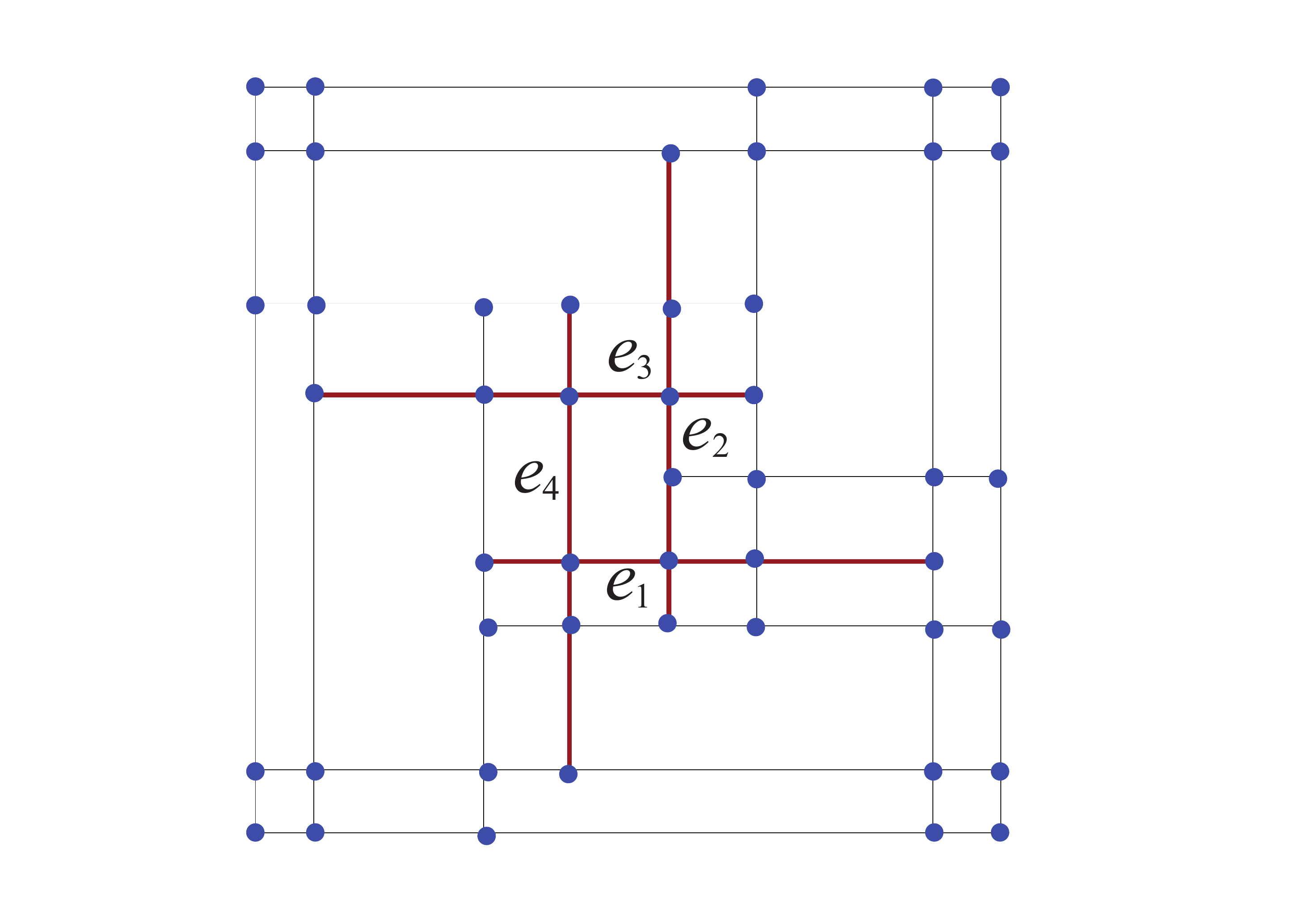}}
~\\[-1ex]
\caption{Two example T-meshes for spline space $\mathcal{S}(3,3,2,2,\mathcal{T})$.} \label{fig:dia_ex1}
\end{figure}
\begin{example}The first examples are spline space $\mathcal{S}(3,3,2,2,\mathcal{T})$ over the two
T-meshes in Figure~\ref{fig:dia_ex1}.

The first T-mesh has some concave corners and one interior l-edge
which is a vanished l-edge since it has only two vertices. And the T-mesh has two cross-cut. So according to
theorem~\ref{the:dim1}, the dimension of the spline space $\mathcal{S}(3,3,2,2,\mathcal{T})$ over the first T-mesh
is $16 + 2\times4 = 24$.

The second T-mesh is much more complex. It has four interior l-edges $e_{i}, i = 1, \dots, 4$, four cross-cuts and five rays.
If we arrange l-edges as $e_{1}$, $e_{2}$, $e_{3}$, $e_{4}$, then the new-vertex-vector corresponds this order is
$(5,5,4,3)$. But we can arrange the l-edges as order $e_{1}$, $e_{4}$, $e_{3}$, $e_{2}$, then the new-vertex-vector
corresponds this order is $(5,4,4,4)$, which means the T-mesh is diagonalizable. In the T-mesh, we have $4$ cross-cuts,
$2$ horizonal and $2$ vertical interior l-edges and $31$ interior vertices, so according to
theorem~\ref{the:dim2}, the dimension of the spline space $\mathcal{S}(3,3,2,2,\mathcal{T})$ over the first T-mesh
is $16 + 4(4-4) + 31= 47$. We should mention that non of existing method can calculate the dimension for this example since the
T-mesh is not T-subdivision or hierarchical.
\end{example}

\begin{figure}[htbp]
\centering
~\\[-1ex]
\subfigure {\includegraphics [width=2.3in]{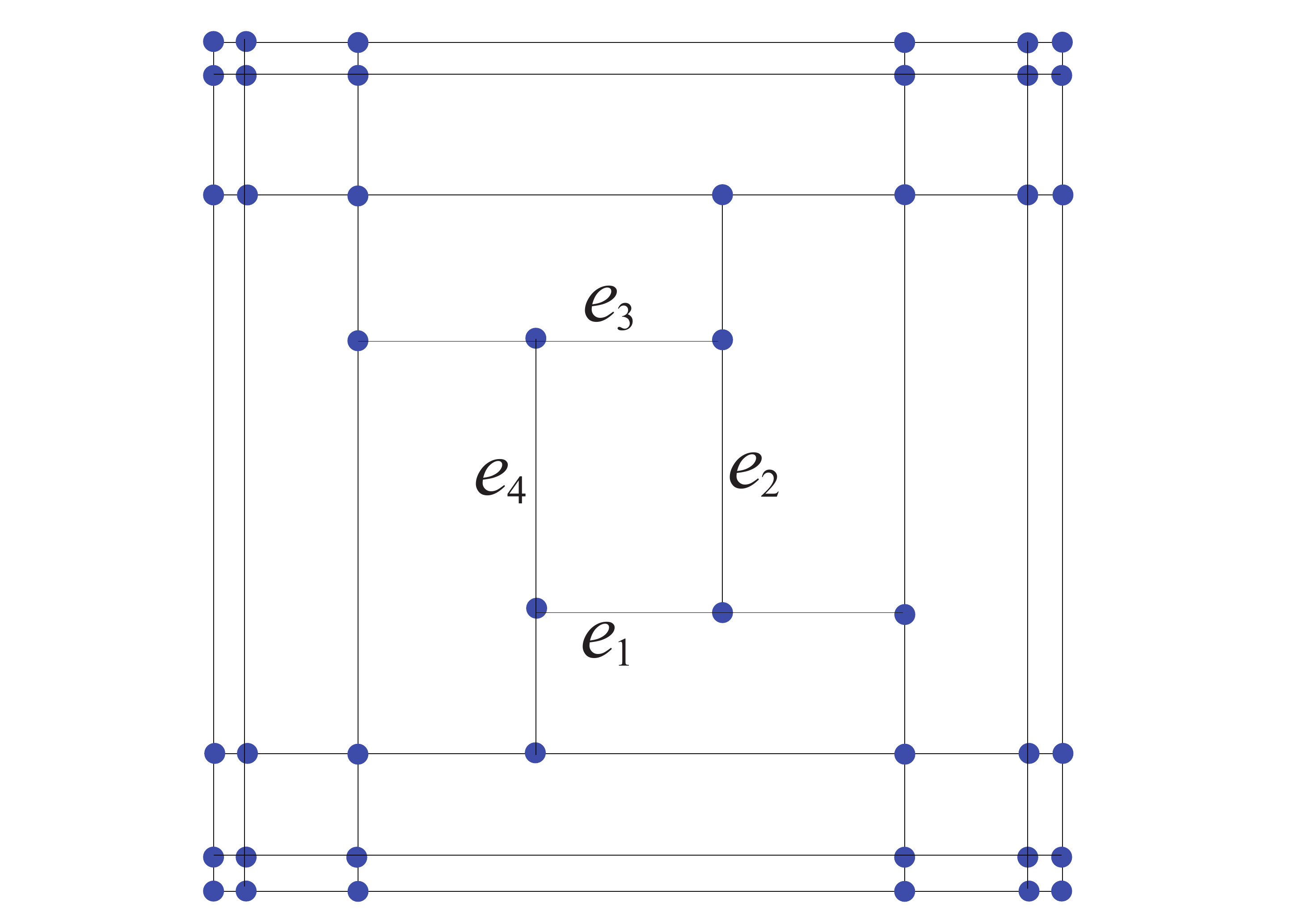}}
\subfigure {\includegraphics [width=2.3in]{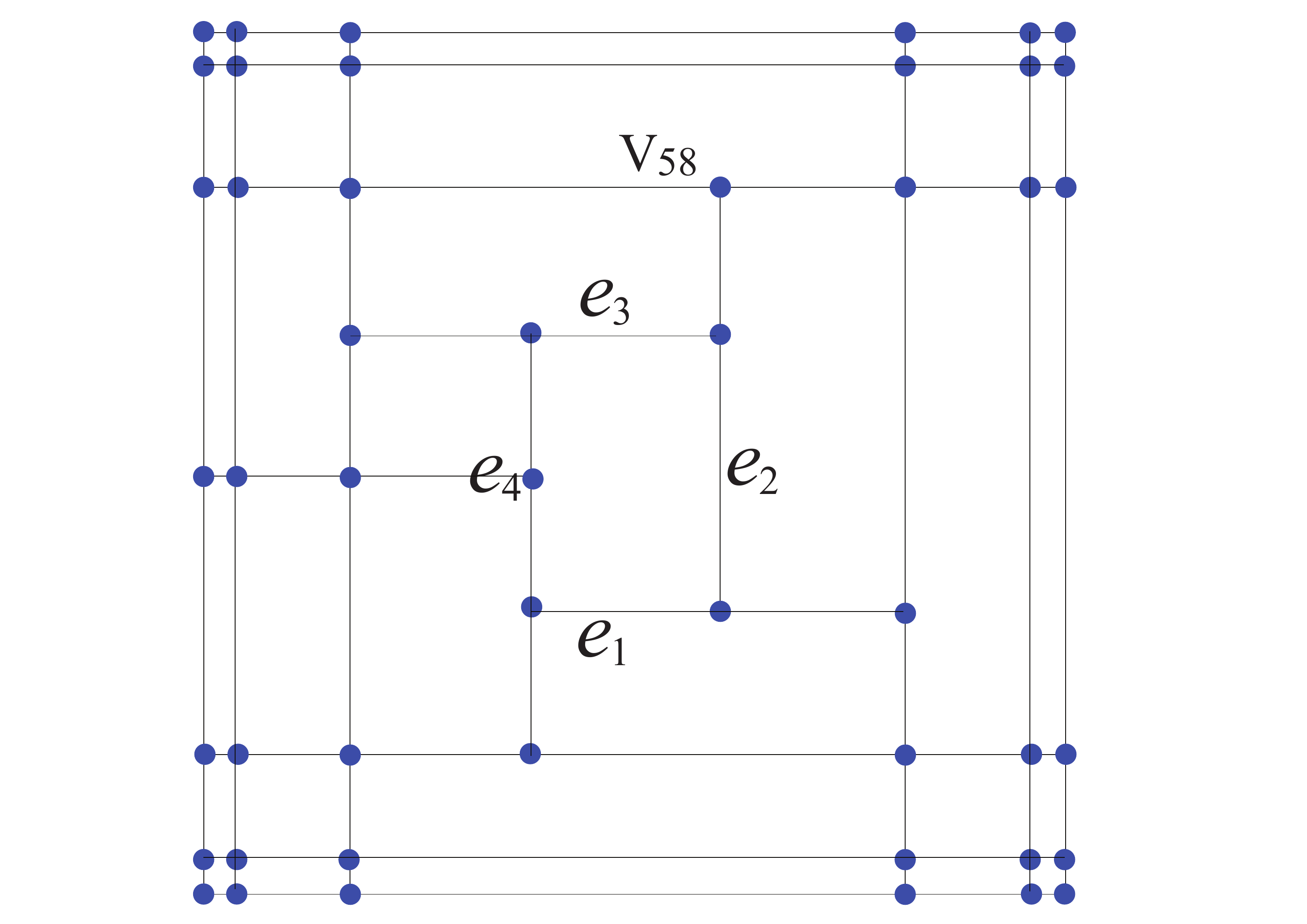}}
~\\[-1ex]
\caption{Non-diagonalizable vs. diagonalizable for spline space $\mathcal{S}(3,3,1,1,\mathcal{T})$.} \label{fig:dia_ex}
\end{figure}
\begin{example}The second example is associated with spline space $\mathcal{S}(3,3,1,1,\mathcal{T})$ over the two
T-meshes in Figure~\ref{fig:dia_ex}.

The first T-mesh has four interior l-edges. And we can see that it is not diagonalizable since the new-vertex-vector
could be $(3,2,2,1)$ or $(3,3,1,1)$ for different orders. Thus, we cannot complete the dimension
using the current method. Actually, according to our knowledge, no existing method can compute
the dimension of such spline space.

The second T-mesh also has four interior l-edges, but according to our knowledge,
no existing method can compute the dimension of such spline space. But if we arrange
the order of the l-edges to be $e_1$, $e_2$, $e_3$ and $e_4$, we can see that the new-vertex-vector
is $(3,2,2,2)$, i.e., the T-mesh is diagonalizable. Since the T-mesh has 8 cross-cut, $4$ interior l-edges, and
$27$ interior vertices. So the dimension of the spline space $\mathcal{S}(3,3,1,1,\mathcal{T})$
over the second T-mesh is $16 + 4\times2\times(8-4) + 27\times4= 156$.
\end{example}

\begin{figure}[htbp]
\centering
~\\[-1ex]
\subfigure {\includegraphics [width=2.3in]{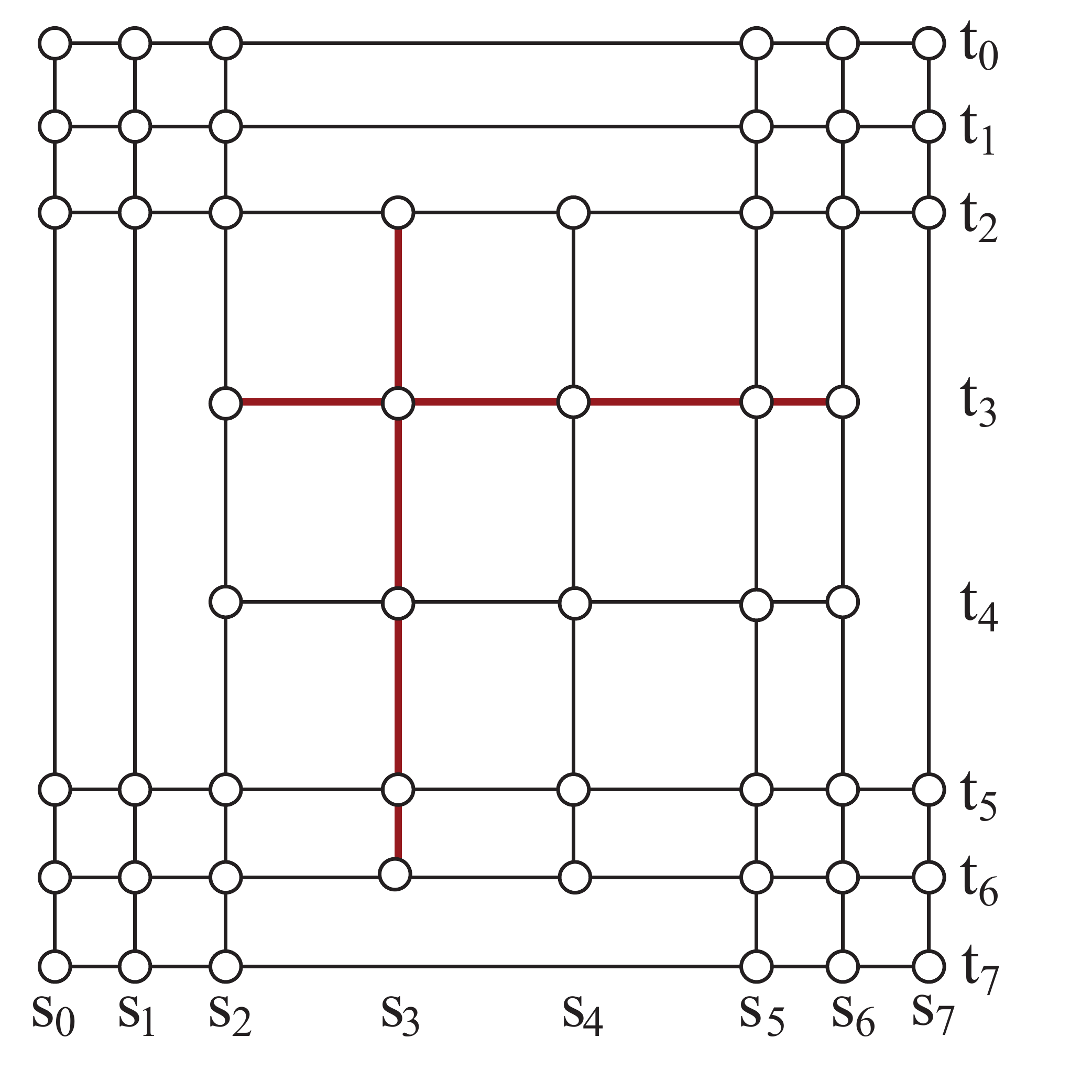}}
\subfigure {\includegraphics [width=2.3in]{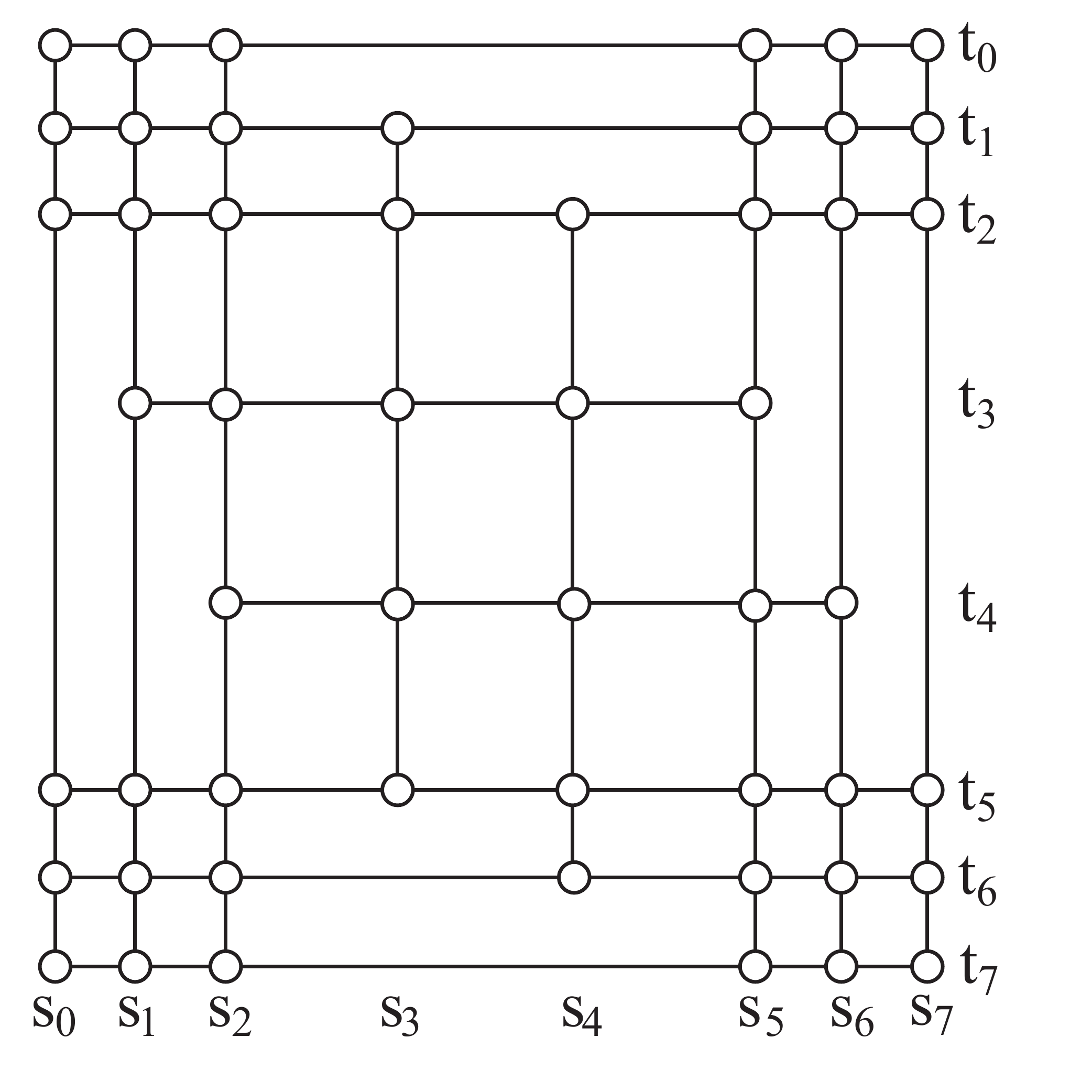}}
~\\[-1ex]
\caption{The dimension of bi-cubic spline space over the T-mesh
is instable, which is generated from the left T-mesh by moving two red l-edges.} \label{fig:dia_ex2}
\end{figure}

\begin{example}The third example is associated with spline space $\mathcal{S}(3,3,2,2,\mathcal{T})$ over the two
T-meshes in Figure~\ref{fig:dia_ex2}. Both T-meshes have four interior l-edges and we can check that the T-meshes
are not diagonalizble. For the first T-mesh, the dimension is $65$ and the dimension for the second T-mesh is
instable, i.e., the dimension is associated with the value of the knots.

We can see that the second T-mesh can be constructed by moving two red l-edges of the first T-mesh. Although the
two T-meshes have different structure, but if we look at the structure of the conformality conditions matrix, we
can see that the structure of two matrixes are identical, except the two red l-edges using the knots of $s_{2}$
to $s_{6}$ instead of $s_{1}$ to $s_{5}$. So diagonalizble is a concept for the structure of the
conformality conditions matrix not T-mesh itself.

This example also tells us that if a T-mesh is not diagonalizable, then we have to consider the knot values in order
to analysis the dimension of the spline space using smoothing cofactor-conformality method.
\end{example}

\begin{remark}A similar result has been abstained in~\cite{Mourrain}, which provides
the dimension for a special T-mesh, called regular T-subdivision. The main difference between
these two method is that the diagonalizable T-meshes don't need to be nested structure.
Regular T-subdivision is a special case of diagonalizable T-mesh.
\end{remark}

\subsection{Characterization}
In this section, we will provide a necessary and sufficient condition for characterization
a diagonalizable T-mesh.
\begin{lemma}\label{lemma:dia}A a necessary and sufficient condition for a T-mesh to be
diagonalizable is for any interior l-edges set $\mathbb{S}$, there at least exists one
horizonal l-edge such that the number of vertices on this l-edge but not on
the other l-edge in $\mathbb{S}$ is at least $N^{h}$, or
there at least exists one vertical l-edge such that the number of vertices on this l-edge but not on
the other l-edge in $\mathbb{S}$ is at least $N^{v}$.
\end{lemma}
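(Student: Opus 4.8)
The plan is to recognize the diagonalizability condition of Definition~\ref{definition1} as the existence of a reverse \emph{elimination ordering} of the interior l-edges, and then to establish the two directions of the characterization by a standard greedy-exchange argument. The pivotal observation is a monotonicity of the new-vertex counts: if a vertex lying on an l-edge $e$ is shared with no l-edge of a set $A$, then a fortiori it is shared with no l-edge of any subset $B \subseteq A$. With this in hand I would first reformulate the condition. By the definition of the new-vertex-vector, the T-mesh is diagonalizable exactly when the l-edges admit an order $e_{i_1},\dots,e_{i_{n_e}}$ such that, for every $j$, the l-edge $e_{i_j}$ carries at least $N^{h}$ (if horizontal) or $N^{v}$ (if vertical) vertices lying on none of $e_{i_1},\dots,e_{i_{j-1}}$. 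Read backwards, this says we can repeatedly delete an l-edge that still possesses $\geq N^{h}$ or $\geq N^{v}$ vertices not shared with the l-edges that remain.

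For necessity, I would suppose such an order exists and fix an arbitrary nonempty set $\mathbb{S}$ of interior l-edges. Let $e_{i_{j^*}}$ be the member of $\mathbb{S}$ occurring latest in the order. Then every other l-edge of $\mathbb{S}$ lies among $e_{i_1},\dots,e_{i_{j^*-1}}$, so by the monotonicity observation the number of vertices of $e_{i_{j^*}}$ avoided by all the other l-edges of $\mathbb{S}$ is at least $\nu^{i}_{j^*}$, which is $\geq N^{h}$ or $\geq N^{v}$ by diagonalizability. Hence $e_{i_{j^*}}$ is the horizontal or vertical witness demanded by the statement for this $\mathbb{S}$.

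For sufficiency, I would construct the order by the reverse elimination itself. Starting from the full collection of l-edges, apply the hypothesis to $\mathbb{S}$ equal to the current collection to obtain an l-edge with at least $N^{h}$ or $N^{v}$ private vertices; assign it the last unfilled position in the order, delete it, and iterate on the smaller collection. When the l-edge $f_j$ is selected, the then-remaining collection is precisely $\{f_1,\dots,f_j\}$, so its private vertices are exactly those of $f_j$ lying on none of $f_1,\dots,f_{j-1}$; this is $\nu_j$, which therefore meets the threshold. Running the procedure to exhaustion yields a complete order satisfying Definition~\ref{definition1}.

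The point requiring care, and the crux of the argument, is the ``no dead end'' feature of the greedy construction: a locally legal deletion could in principle leave an unorderable remainder. It is exactly the universal quantifier over \emph{all} l-edge subsets $\mathbb{S}$—not merely over the full set—that forecloses this, because every intermediate collection reached by the procedure is itself some subset and hence inherits a removable l-edge, so the greedy deletion can never stall. The residual bookkeeping is routine: matching ``not on earlier l-edges in the order'' with ``not on the other l-edges in $\mathbb{S}$'' through the monotonicity observation, handling the empty set vacuously, and noting that singleton sets and, more generally, any l-edge that is the unique latest member of $\mathbb{S}$ satisfy the count automatically since a non-vanished l-edge already has $\geq N^{h}$ or $\geq N^{v}$ vertices.
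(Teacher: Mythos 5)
Your proposal is correct, and both directions follow the same overall strategy as the paper: for necessity you examine the member of $\mathbb{S}$ occurring latest in a diagonalizing order and use the containment $\mathbb{S}\setminus\{e_{i_{j^*}}\}\subseteq\{e_{i_1},\dots,e_{i_{j^*-1}}\}$ (the paper phrases this as a proof by contradiction, you argue it directly — same content); for sufficiency both proofs build the order greedily by applying the hypothesis to successively smaller subsets. The one substantive difference is the orientation of the greedy construction, and here your version is the sound one. The paper fills the order from the front: it applies the hypothesis to the set of \emph{not-yet-ordered} l-edges $\{e_{j+1},\dots,e_{n_e}\}$ and selects $e_{j+1}$ with many vertices private relative to $e_{j+2},\dots,e_{n_e}$, whereas Definition~\ref{definition1} requires $e_{j+1}$ to have many vertices not on the \emph{earlier} l-edges $e_1,\dots,e_j$ — two conditions that do not coincide, so the paper's induction step does not literally verify what is needed. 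Your reverse elimination (select a witness from the current collection, place it in the last unfilled slot, recurse) makes the remaining collection at each step exactly the set of l-edges preceding the selected one in the final order, so the privacy count you certify is precisely $\nu_j$. In short: same idea, but your execution of the sufficiency half closes a directional slip in the paper's own argument.
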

\begin{proof}First, we prove the condition is necessary using reduction to absurdity. IF the T-mesh is
diagonalizable, but there exists a set of l-edges $\{ e_{i_{1}}, e_{i_{2}}, \dots, d_{i_{s}}\}$ such that any
horizonal l-edges in the set at most have $N^{h}$ vertices which are not on the other l-edges in the set and any
vertical l-edges at most have $N^{v}$ vertices which are not on the other l-edges in the set. And since the T-mesh
is diagonalizable, so without loss of generalization, we assume when the order of the interior l-edges is
$e_{1}, e_{2}, \dots, e_{n_{e}}$, in which order the l-edges satisfy the condition of diagonalizable. Let $k$ to the
maximal index for all $i_{j}, j = 1, \dots, s$. Now, we consider l-edge $e_{k}$, since it has at most $N^{h}-1$ or $N^{v}-1$
vertices which are not on the other l-edges in the set, so it also has at most $N^{h}-1$ or $N^{v}-1$
vertices are not on the l-edges for $e_{1}, \dots, e_{k-1}$ since $\mathbb{S} \subseteq \{e_{1}, \dots, e_{k-1}, e_{k}\}$,
which violates the assumption of diagonalizable. Thus, the condition is necessary.

Now we prove the condition is sufficient. For the set of l-edges $e_{i}, i = 1, \dots, n_{e}$,
according to the assumption, there exist one l-edge which has enough vertices on the l-edge but not on
the others. Without loss of generalization, we assume it is $e_{1}$. Suppose we have ordered the l-edges
as $e_{1}, e_{2}, \dots, e_{j}$ satisfy the diagonalizable condition, then for set $\{e_{j+1}, \dots, e_{n_{e}}\}$, according to
the assumption,  there exist one l-edge which has enough vertices on the l-edge but not on
the others. Without loss of generalization, we assume it is $e_{j+1}$. With this process, we can order the l-edges such that it
satisfy the diagonalizable condition, which completes the proof.
\end{proof}

\section{Correction for~\cite{LiWaZh06}}
In this section, we will show that the dimension result in~\cite{LiWaZh06}
is not right. Precisely, the dimension under the condition of~\cite{LiWaZh06}
is possible instable. And we also provide a new modified theorem using
the technology builded in the last section.

\subsection{A instable example under the condition of~\cite{LiWaZh06}}
\begin{figure}[htbp]
\centering
~\\[-1ex]
\subfigure {\includegraphics [width=3.0in]{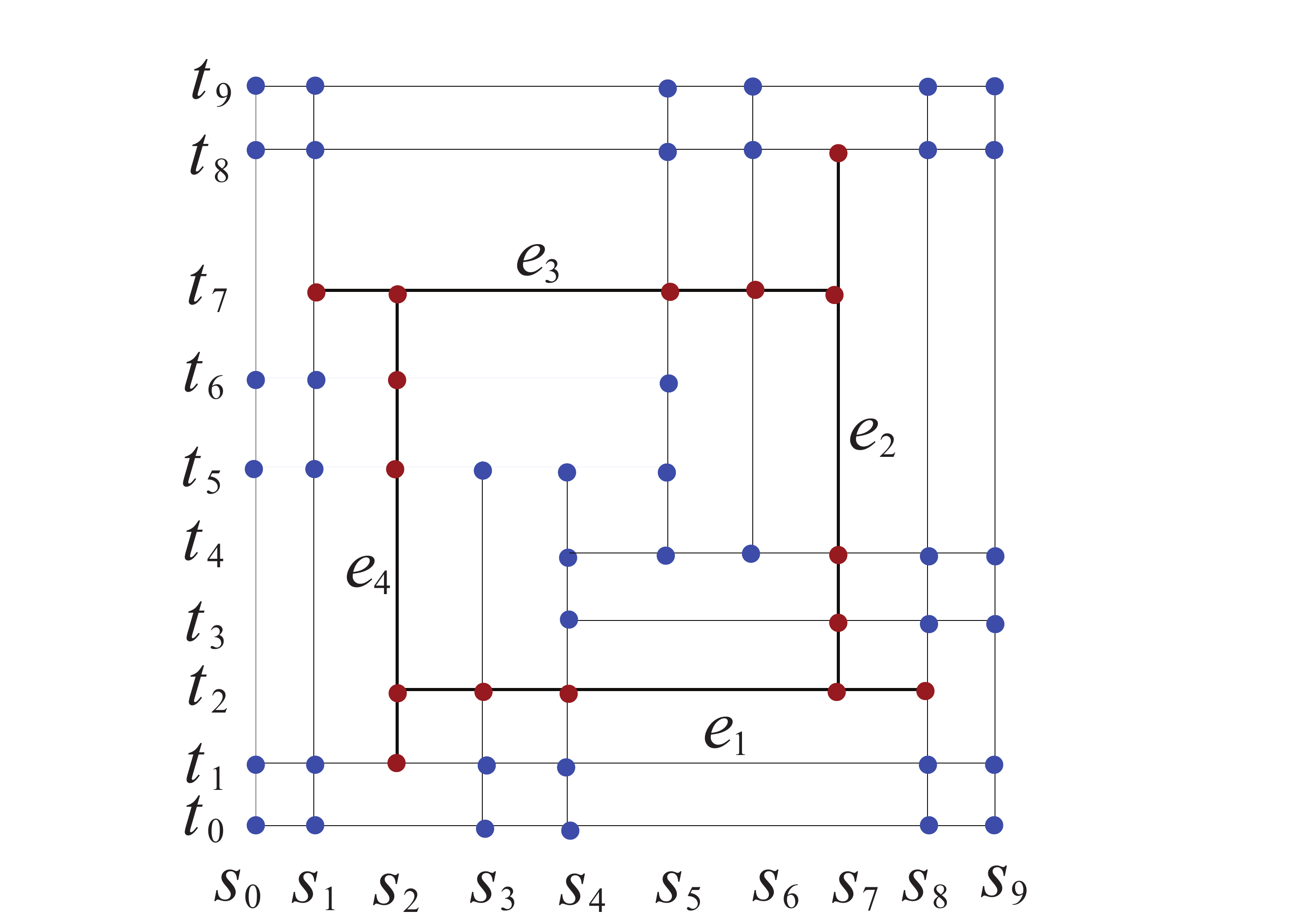}}
~\\[-1ex]
\caption{Counterexample for paper~\cite{LiWaZh06}.} \label{fig:counter}
~\\[-1ex]
\end{figure}

Consider the spline space $\mathcal{S}(3,3,2,2,\mathcal{T})$ over the T-mesh illustrated in
Figure~\ref{fig:counter}. There are four interior l-edges in the T-mesh and each one have two
mocro-vertices in the interior, which satisfy the condition in~\cite{LiWaZh06}. However, we will analysis
the dimension in the following and show that the dimension of the spline space over the T-mesh
is instable.

Actually, we arrange the interior l-edges as the order of $e_{1}$, $e_{2}$,
$e_{3}$ and $e_{4}$. And we arrange the order of the vertices as
$v_{2,2}$, $v_{3,2}$, $v_{4,2}$, $v_{8,2}$, $v_{7,2}$, $v_{7,3}$, $v_{7,4}$, $v_{7,8}$, $v_{7,7}$, $v_{6,7}$,
$v_{5,7}$, $v_{1,7}$, $ v_{2,7}$, $v_{2,6}$, $v_{2,5}$, $v_{2,1}$ and $v_{2,2}$, then we can
get the sparse matrix $\m$ which has the following form,
$$\m = \left(
        \begin{array}{cccccccccccccccc}
          1 & 1 & 1 & 1 & 1 & 0 & 0 & 0 & 0 & 0 & 0 & 0 & 0 & 0 & 0 & 0 \\
          s_{2} & s_{3} & s_{4} & s_{8} & s_{7} & 0 & 0 & 0 & 0 & 0 & 0 & 0 & 0 & 0 & 0 & 0 \\
          s_{2}^2 & s_{3}^2 & s_{4}^2 & s_{8}^2 & s_{7}^2 & 0 & 0 & 0 & 0 & 0 & 0 & 0 & 0 & 0 & 0 & 0 \\
          s_{2}^3 & s_{3}^3 & s_{4}^3 & s_{8}^3 & s_{7}^3 & 0 & 0 & 0 & 0 & 0 & 0 & 0 & 0 & 0 & 0 & 0 \\
          0 & 0 & 0 & 0 & 1 & 1 & 1 & 1 & 1 & 0 & 0 & 0 & 0 & 0 & 0 & 0 \\
          0 & 0 & 0 & 0 & t_{2} & t_{3} & t_{4} & t_{8} & t_{7} & 0 & 0 & 0 & 0 & 0 & 0 & 0 \\
          0 & 0 & 0 & 0 & t_{2} & t_{3}^2 & t_{4}^2 & t_{8}^2 & t_{7}^2 & 0 & 0 & 0 & 0 & 0 & 0 & 0 \\
          0 & 0 & 0 & 0 & t_{2} & t_{3}^3 & t_{4}^3 & t_{8}^3 & t_{7}^3 & 0 & 0 & 0 & 0 & 0 & 0 & 0 \\
          0 & 0 & 0 & 0 & 0 & 0 & 0 & 0 & 1 & 1 & 1 & 1 & 1 & 0 & 0 & 0 \\
          0 & 0 & 0 & 0 & 0 & 0 & 0 & 0 & s_{7} & s_{6} & s_{5} & s_{1} & s_{2} & 0 & 0 & 0 \\
          0 & 0 & 0 & 0 & 0 & 0 & 0 & 0 & s_{7}^2 & s_{6}^2 & s_{5}^2 & s_{1}^2 & s_{2}^2 & 0 & 0 & 0 \\
          0 & 0 & 0 & 0 & 0 & 0 & 0 & 0 & s_{7}^3 & s_{6}^3 & s_{5}^3 & s_{1}^3 & s_{2}^3 & 0 & 0 & 0 \\
          1 & 0 & 0 & 0 & 0 & 0 & 0 & 0 & 0 & 0 & 0 & 0 & 1 & 1 & 1 & 1 \\
          t_{2} & 0 & 0 & 0 & 0 & 0 & 0 & 0 & 0 & 0 & 0 & 0 & t_{7} & t_{6} & t_{5} & t_{1} \\
          t_{2}^2 & 0 & 0 & 0 & 0 & 0 & 0 & 0 & 0 & 0 & 0 & 0 & t_{7}^2 & t_{6}^2 & t_{5}^2 & t_{1}^2 \\
          t_{2}^3 & 0 & 0 & 0 & 0 & 0 & 0 & 0 & 0 & 0 & 0 & 0 & t_{7}^3 & t_{6}^3 & t_{5}^3 & t_{1}^3 \\
        \end{array}
      \right)
$$

If $s_{i} = i$ and $t_{j} = j$, we can verify that the rank of the matrix is $15$, thus the dimension of
the spline space is $49$. If we perturb one of the knots a little bit, such as $s_{3} = 3.0 + \epsilon$,
where $\epsilon$ is an arbitrary small value, then the dimension is $48$. In other words, the dimension is instable.

\subsection{Correction theorem}
In this section, we will provide a new theorem to correct the theorem in~\cite{LiWaZh06} using the method in the last section.
\begin{theorem}Given a regular T-mesh $\mt$ without holes, if each
horizontal l-edges has at least $N^{h} - 1$ mono-vertices and each vertical l-edge has at least
$N^{v} - 1$ mono-vertices except two end vertices, then the dimension of the spline
space defined on $\mt$ is
\begin{align*}
\dim\mathcal{S}(d_{1}, d_{2}, \alpha, \beta, \mathcal{T}) = & (d_{1}
+ 1)(d_{2} + 1) + (C^{h} - T^{h})(d_{1} + 1)(d_{2} - \beta) + \nonumber \\
& (C^{v} - T^{v})(d_{2} + 1)(d_{1} - \alpha) + V(d_{1} - \alpha)(d_{2} - \beta).
\end{align*}
\end{theorem}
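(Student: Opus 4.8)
The plan is to show that the stated hypothesis forces $\mt$ to be diagonalizable, after which the displayed dimension is immediate from Theorem~\ref{the:dim2}, whose conclusion it reproduces verbatim. So the entire content of the proof is the implication ``each interior l-edge carries at least $N^{h}-1$ (resp. $N^{v}-1$) interior mono-vertices'' $\Rightarrow$ ``$\mt$ is diagonalizable.'' I would establish diagonalizability through the characterization in Lemma~\ref{lemma:dia}: for every set $\mathbb{S}$ of interior l-edges I must exhibit one member of $\mathbb{S}$ carrying at least $N^{h}$ (horizontal) or $N^{v}$ (vertical) vertices that lie on it but on no other l-edge of $\mathbb{S}$.

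First I would record the one structural fact that makes mono-vertices decisive here: a mono-vertex of an interior l-edge $\ell$ is by definition the intersection of $\ell$ with a cross-cut or a ray, neither of which is an interior l-edge, and no second interior l-edge can pass through that point, since it would have to be collinear with the cross-cut/ray and hence equal to it, which is impossible because the two have different endpoint types. Consequently every interior mono-vertex of $\ell$ is private to $\ell$ relative to \emph{any} set $\mathbb{S}$ of interior l-edges. Thus each l-edge already owns at least $N^{h}-1$ (resp. $N^{v}-1$) private vertices for free, and it remains only to produce, for each $\mathbb{S}$, one member of $\mathbb{S}$ with a single \emph{additional} private vertex, necessarily distinct from those mono-vertices.

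The extra vertex I would extract by an extremality argument on endpoints. Each endpoint of an interior l-edge is a T-junction lying in the interior of a perpendicular interior l-edge (by the tiling structure of a regular hole-free T-mesh, such endpoints are trivalent), so endpoints of horizontal l-edges sit on vertical l-edges and vice versa, and they are distinct from the interior mono-vertices. Given $\mathbb{S}$: if it contains no vertical l-edge, then any horizontal $\ell\in\mathbb{S}$ has both endpoints on vertical l-edges outside $\mathbb{S}$, supplying the extra private vertex at once (symmetrically if $\mathbb{S}$ is all vertical). In the mixed case I would pick the rightmost vertical l-edge $g\in\mathbb{S}$ and examine the horizontal l-edge $h$ through its top endpoint $v$; since $v$ is interior to $h$, $h$ extends strictly to the right of $g$, so the right endpoint of $h$ rests on a vertical l-edge whose $x$-coordinate strictly exceeds that of $g$ and which therefore cannot belong to $\mathbb{S}$. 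If $h\in\mathbb{S}$, that right endpoint is a private vertex of $h$; if $h\notin\mathbb{S}$, then $v$ is a private vertex of $g$. Either way some member of $\mathbb{S}$ gains one private endpoint beyond its mono-vertices, raising its private count to at least $N^{h}$ or $N^{v}$, which is exactly the hypothesis of Lemma~\ref{lemma:dia}. Diagonalizability follows, and Theorem~\ref{the:dim2} delivers the displayed formula.

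The step I expect to be the crux is this extremality argument, and specifically pinning the T-junction geometry down tightly enough to guarantee that the extra vertex is (i) genuinely on an l-edge of $\mathbb{S}$, (ii) genuinely off every other l-edge of $\mathbb{S}$, and (iii) not already among the mono-vertices. It is also worth remarking why the threshold must be $N^{h}-1$ rather than the $N^{h}-2$ asserted in~\cite{LiWaZh06}: the extremality argument guarantees only \emph{one} extra private vertex per set $\mathbb{S}$, since the opposite endpoint may well rest on an l-edge already in $\mathbb{S}$, so $N^{h}-2$ mono-vertices would leave the private count stuck at $N^{h}-1<N^{h}$ --- precisely the deficit responsible for the instability exhibited in Figure~\ref{fig:counter}.
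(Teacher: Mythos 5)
Your proof is correct and follows essentially the same route as the paper's: both reduce the theorem to Lemma~\ref{lemma:dia} (hence to Theorem~\ref{the:dim2}), observe that the interior mono-vertices are automatically private to their l-edge relative to any set $\mathbb{S}$ of interior l-edges, and then run an extremal-position argument on $\mathbb{S}$ to extract one additional private endpoint. The only differences are cosmetic: you take the rightmost vertical l-edge of $\mathbb{S}$ and the horizontal l-edge through its top endpoint, whereas the paper takes the bottommost horizontal l-edge and the vertical l-edge through its right endpoint, and you handle the all-horizontal/all-vertical cases explicitly where the paper leaves them implicit.
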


\begin{figure}[htbp]
\centering
~\\[-1ex]
\subfigure {\includegraphics [width=2in]{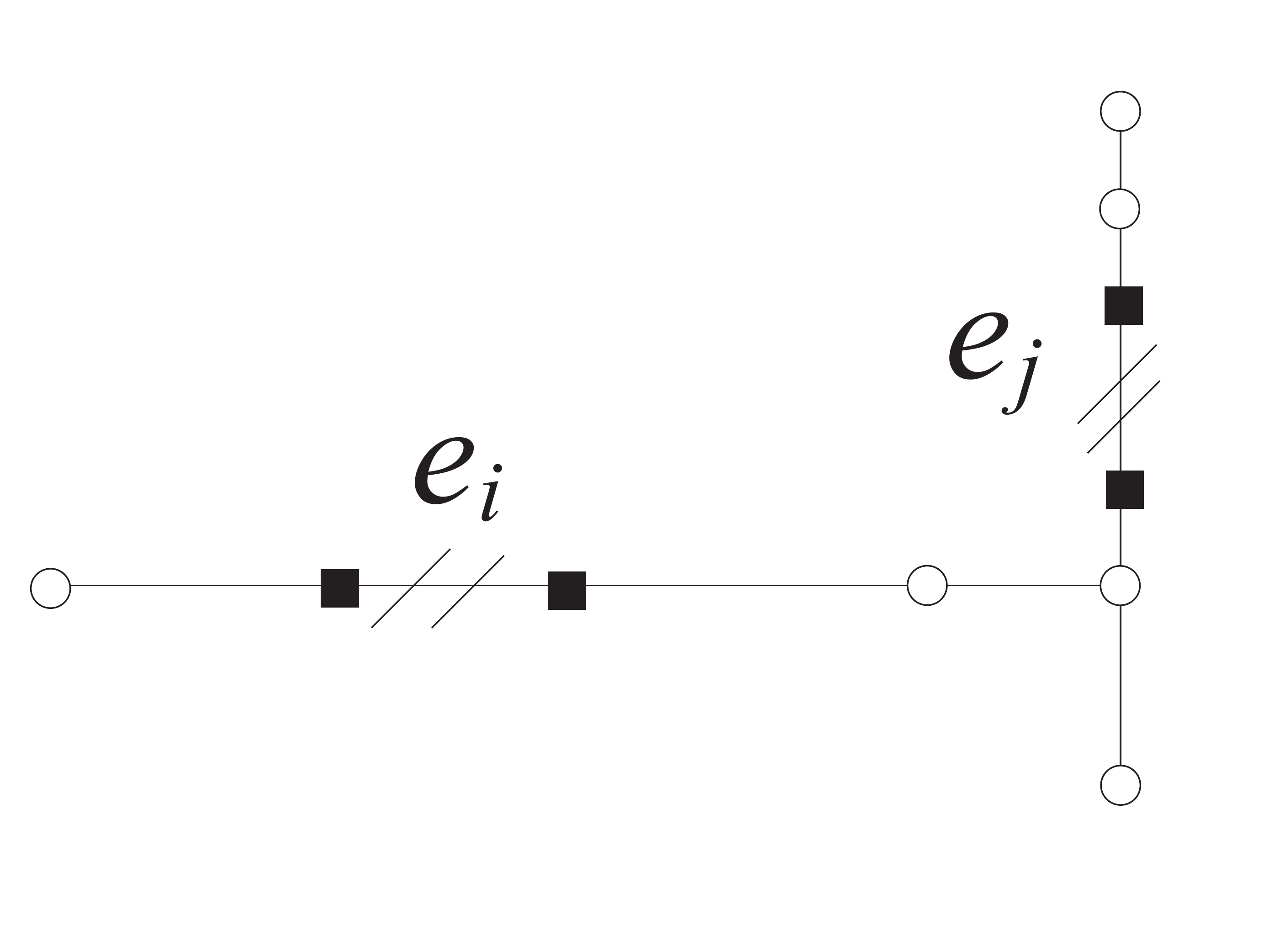}}
~\\[-1ex]
\caption{The new correction theorem for~\cite{LiWaZh06}.} \label{fig:proof}
~\\[-1ex]
\end{figure}
\begin{proof}We will prove that under the condition, the T-mesh is diagonalizable using reduction to absurdity.
Actually, if the T-mesh is not diagonalizable. Then for any set of l-edges, it is bounded. Suppose $e_{i}$ is
the most bottom horizonal l-edges in the set (if there are
more than one l-edge in the set, we will pick the leftmost one), see Figure~\ref{fig:proof} as an illustration.
According to the assumption, $e_{i}$ has at
least $N^{h} - 1$ mono-vertices (black rectangle vertices in the figure), if one of the two end vertices is not on
the other l-edges of in the set, then $e_{i}$ at least has $N^{h}$ vertices which are not on the other l-edges
in the set. According to Lemma~\ref{lemma:dia}, the T-mesh is diagonalizable which violates the assumption. Thus,
there exists a vertical l-edge, $e_{j}$, which contains
one of the end vertices of $e_{i}$. Without loss of generalization, we assume the right end vertex is on the l-edge
$e_{j}$ in the set. Now, we consider l-edge $e_{j}$, the bottom end vertex of the l-edge cannot lie on the other
l-edges in the set because it is on the bottom of $e_{i}$ which is the bottommost l-edges in the set. So $e_{j}$ at least has
$N^{v}$ vertices which are not on the other l-edges in the set. According to Lemma~\ref{lemma:dim}, the T-mesh is diagonalizable which violates the assumption.
Using Theorem~\ref{the:dim2}, we prove the theorem.
\end{proof}

\section{Conclusion and Future work}
In the present paper, we introduce a class of T-meshes, diagonalizable T-meshes, over which
the dimension of spline spaces is stable. We also provide a necessary and sufficient condition
to characterize this class of T-meshes. The dimension result in the present paper can cover all
the existing dimension results as special cases.

The paper leaves several open problems for further research. As we
have provided the dimension of the spline space, so there are many
problems which need to be solved, such as construction of a set of
basis functions with good properties, geometric operations and
properties of the splines over T-meshes, etc. We will explore these
problems in detail in future papers. It is also
an important and interesting question to find out other general with
fix dimension T-meshes.

\section*{Acknowledgements}
The authors are supported by the NSF of China
(No.11031007, No.60903148), Chinese Universities Scientific Fund,
SRF for ROCS SE and Chinese Academy of Science (Startup Scientific
Research Foundation).

\bibliographystyle{elsarticle-num}
\bibliography{lixin_bi}
\end{document}